\newtheorem{theorem}{Theorem}[section]
\newtheorem{proposition}[theorem]{Proposition}
\theoremstyle{definition}
\theoremstyle{remark}
\numberwithin{equation}{section}
\begin{document}

\title[A Total Q-Curvature Upper Bound and its Isoperimetric
Deficit]{An Upper Bound of the Total Q-Curvature and Its
Isoperimetric Deficit for Higher-dimensional Conformal Euclidean
Metrics}


\thanks{J. Xiao was supported in part by Natural Science and
Engineering Research Council of Canada.}

\author{C. B. Ndiaye}
\address{Mathematisches Institut der Eberhard-Karls-Universit\"at T\"ubingen, Auf der Morgenstelle 10, D-72076 T\"ubingen, Germany.
}
\email{ndiaye@everest.mathematik.uni-tuebingen.de}

\author{J. Xiao}
\address{Department of Mathematics and Statistics, Memorial University of Newfoundland, St. John's, NL A1C 5S7, Canada}
\email{jxiao@mun.ca}

\subjclass[2000]{Primary 53A30, 35J35; Secondary 53A05, 53A55, 25J6,
31A05}

\date{}


\keywords{}

\begin{abstract}
The aim of this paper is to give not only an explicit upper bound of
the total Q-curvature but also an induced isoperimetric deficit
formula for the complete conformal metrics on $\mathbb R^n$, $n\ge
3$ with scalar curvature being nonnegative near infinity and
Q-curvature being absolutely convergent.
\end{abstract}
\maketitle

\section{Introduction}

To begin with, let us agree to some basic conventions. We employ
the symbols $\Delta$ and $\nabla$ to denote the Laplace operator
$\sum_{k=1}^n\partial^2/\partial x_k^2$ and the gradient vector
$(\partial/\partial x_1,...,\partial/\partial x_n)$ over the
Euclidean space $\mathbb R^n$ (with $o$ as the origin), $n\ge 2$. For notational convenience
we use $X\lesssim Y$ as $X\le CY$ for a constant $C>0$. We always
assume that $u$ is a smooth real-valued function on $\mathbb R^n$,
written $u\in C^\infty(\mathbb R^n)$, and then it generates a
conformal metric $g=e^{2u}g_0$ which is indeed a conformal
deformation of the standard Euclidean metric $g_0=\sum_{k=1}^n
dx_k^2$. The volume and surface area elements of the metric $g$ are
given by
$$
dv_g=e^{nu}d\mathcal{H}^n\quad\hbox{and}\quad
ds_g=e^{(n-1)u}d\mathcal{H}^{n-1}
$$
where $\mathcal{H}^k$ stands for the $k$-dimensional Hausdorff
measure. So, the volume and surface area of the open ball $B_r(x)$
and its boundary $\partial B_r(x)$ with radius $r>0$ and center
$x\in\mathbb R^n$ have the following values:
$$
v_g\big(B_r(x)\big)=\int_{B_r(x)}e^{nu}\,d\mathcal{H}^n\quad\hbox{and}\quad
s_g\big(\partial B_r(x)\big)=\int_{\partial
B_r(x)}e^{(n-1)u}\,d\mathcal{H}^{n-1}.
$$
More importantly, this metric takes two kinds of nonlinear operators
as the simplest ways of describing the curvature of the Riemannian
manifold $(\mathbb R^n, g)$. One is the scalar curvature (or Ricci
scalar) function
$$
S_{g,n}(x)=-2(n-1)e^{-2u(x)}\Big(\Delta u(x)+\frac{n-2}{2}|\nabla
u(x)|^2\Big).
$$
The other is the so-called Q-curvature (or Paneitz curvature)
function which, according to Fefferman-Graham \cite{FeGr}, can be
determined by
$$
Q_{g,n}(x)=e^{-nu(x)}(-\Delta)^{n/2} u(x),
$$
whose even and odd cases $Q_{g,2m}$ and $Q_{g,2m-1}$ are regarded
respectively as differential and pseudo-differential operators.

Note that both the scalar curvature $2^{-1}S_{g,2}$ and the
Q-curvature $Q_{g,2}$ are equal to the classical Gaussian curvature
$K$ which completely characterizes the curvature of the
two-dimensional Riemannian manifold $(\mathbb R^2, g)$. So it is
quite natural to recall a fundamental inequality of Gauss-Bonnet
integral in the theory of complete surfaces of totally finite
Gaussian curvature. According to Cohn-Vossen \cite{CV}, we see that
if $g=e^{2u}g_0$ is complete and analytic then
$$
\int_{\mathbb R^2}|K|dv_g=\int_{\mathbb R^2}|(-\Delta
u)|d\mathcal{H}^2<\infty
$$
yields the following Gauss-Bonnet integral inequality
\begin{equation}\label{eq2}
\int_{\mathbb R^2}K\,dv_g=\int_{\mathbb R^2}(-\Delta
u)\,d\mathcal{H}^2\le 2\pi.
\end{equation}
Furthermore, according to Finn \cite{Fi} we get that if
$g=e^{2u}g_0$ is complete and normal then $\int_{\mathbb
R^2}|K|dv_g<\infty$ implies the so-called Finn's isoperimetric
deficit formula
\begin{equation}\label{eq3}
1-\frac{1}{2\pi}\int_{\mathbb
R^2}K\,dv_g=1-\frac{1}{2\pi}\int_{\mathbb R^2}(-\Delta
u)d\mathcal{H}^2=\lim_{r\to\infty}\frac{\big(s_g\big(\partial
B_r(o)\big)\big)^2}{4\pi v_g\big(B_r(o)\big)}.
\end{equation}

In their 2000 paper \cite{ChQiYa} (see also its follow-up \cite{ChQiYa1}), Chang, Qing and Yang extend the
above results (\ref{eq2}) and (\ref{eq3}) to $\mathbb R^4$ in terms of the scalar and Q
curvatures. More precisely, if $g=e^{2u}g_0$ is complete and its
scalar curvature $S_{g,4}$ is nonnegative near infinity, then
$$
\int_{\mathbb R^4}|Q_{g,4}|\,dv_g=\int_{\mathbb R^4}|(-\Delta)^2
u|\,d\mathcal{H}^4<\infty
$$
implies the following Chang-Qing-Yang's integral inequality of
Gauss-Bonnet-Chern type
\begin{equation}\label{eq5}
\int_{\mathbb R^4}Q_{g,4}\,dv_g=\int_{\mathbb R^4}(-\Delta)^2 u\,
d\mathcal{H}^4\le 8\pi^2
\end{equation}
and Chang-Qing-Yang's isoperimetric deficit formula
\begin{equation}\label{eq6}
1-\frac{1}{8\pi^2}\int_{\mathbb
R^4}Q_{g,4}\,dv_g=1-\frac{1}{8\pi^2}\int_{\mathbb R^4}(-\Delta)^2
u\,d\mathcal{H}^4=\lim_{r\to\infty}\frac{\big(s_{g}\big(\partial
B_r(o)\big)\big)^{4/3}}{4(2\pi^2)^{1/3}v_g\big(B_r(o)\big)}.
\end{equation}

In his 2005 paper \cite{Fa}, Fang generalizes (\ref{eq5}) but not
(\ref{eq6}) to the even-dimensional space $\mathbb R^{n}$.
Explicitly speaking, suppose $n\ge 4$ is even, if $g=e^{2u}g_0$ is
complete and its scalar curvature $S_{g,n}$ is nonnegative near
infinity, then
$$
\int_{\mathbb R^n}|Q_{g,n}|\,dv_g=\int_{\mathbb R^n}|(-\Delta)^{n/2}
u|\,d\mathcal{H}^n<\infty
$$
yields the following Fang's integral inequality of
Gauss-Bonnet-Chern type
\begin{equation}\label{eq8}
\int_{\mathbb R^n}Q_{g,n}\,dv_g=\int_{\mathbb R^n}(-\Delta)^{n/2}u\,
d\mathcal{H}^n\le 2^{n-1}(n/2-1)!\pi^{n/2}.
\end{equation}

In our current paper, we establish the odd-dimensional version of
(\ref{eq8}) (covering (\ref{eq5})) and the any-dimensional extension
of (\ref{eq6}). Actually, our main assertion is stated in such a way
that can cover all Riemannian manifolds $(\mathbb R^n,g)$ with $n\ge
3$.

\begin{theorem}\label{t1} Given an integer $n\ge 3$ and a function $u\in C^\infty(\mathbb R^n)$, let
$g=e^{2u}g_0$ be complete with
$$\liminf_{|x|\to\infty}S_{g,n}(x)\ge
0\quad\hbox{and}\quad \int_{\mathbb R^n}|Q_{g,n}|\,dv_g<\infty.
$$
Then
\begin{equation}\label{eq10}
\int_{\mathbb R^n}Q_{g,n}\,dv_g=\int_{\mathbb R^n}(-\Delta)^{n/2}u\,
d\mathcal{H}^n\le 2^{n-1}\Gamma(n/2)\pi^{n/2}
\end{equation}
and
\begin{equation}\label{eq11a}
1-\frac{\int_{\mathbb
R^n}Q_{g,n}\,dv_g}{2^{n-1}\Gamma(n/2)\pi^{n/2}}=1-\frac{\int_{\mathbb
R^n}(-\Delta)^{n/2}u\,d\mathcal{H}^n}{2^{n-1}\Gamma(n/2)\pi^{n/2}}=\lim_{r\to\infty}
\frac{\big(s_g\big(\partial
B_r(o)\big)\big)^{n/(n-1)}}{(n\omega_n^{1/n})^{n/(n-1)}v_g\big(B_r(o)\big)},
\end{equation}
hold, where $\omega_n=\frac{2\pi^{n/2}}{n\Gamma(n/2)}$ is the
Lebesgue measure of the unit ball $B_1(o)$.
\end{theorem}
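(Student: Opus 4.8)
The plan is to extract from the two hypotheses a sharp expansion of $u$ at infinity and then read both conclusions off it. Put $\gamma_n:=2^{n-1}\Gamma(n/2)\pi^{n/2}$ and
\[
\alpha:=\frac1{\gamma_n}\int_{\mathbb R^n}Q_{g,n}\,dv_g=\frac1{\gamma_n}\int_{\mathbb R^n}(-\Delta)^{n/2}u\,d\mathcal H^n ,
\]
so that (\ref{eq10}) amounts to $\alpha\le1$, while (\ref{eq11a}) will follow once the right-hand limit there is shown to equal $1-\alpha$. Set $f:=(-\Delta)^{n/2}u=Q_{g,n}e^{nu}$; the curvature integrability hypothesis says precisely $f\in L^1(\mathbb R^n)$. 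Since $\gamma_n^{-1}\log\frac1{|x|}$ is the fundamental solution of $(-\Delta)^{n/2}$ on $\mathbb R^n$ (classical for even $n$; for odd $n$ obtained by continuing in $s$, as $s\to n/2$, the fundamental solution $\frac{\Gamma(n/2-s)}{4^{s}\pi^{n/2}\Gamma(s)}|x|^{2s-n}$ of $(-\Delta)^{s}$), the function
\[
v(x):=\frac1{\gamma_n}\int_{\mathbb R^n}\log\frac{|y|}{|x-y|}\,f(y)\,dy
\]
is well defined (the factor $\log|y|$ makes the kernel summable against $f\in L^1$ at $y=\infty$) and solves $(-\Delta)^{n/2}v=f$ in the distributional sense. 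A routine splitting of this integral over $\{|y|\le|x|/2\}$, $\{|y|\ge2|x|\}$ and $\{|x|/2<|y|<2|x|\}$, using only $f\in L^1$ and the local boundedness of $f$, gives $v(x)=-\alpha\log|x|+o(\log|x|)$, $\nabla v(x)=O(|x|^{-1})$ and $\Delta v(x)=o(1)$ as $|x|\to\infty$.

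Next, the remainder $h:=u-v$ satisfies $(-\Delta)^{n/2}h=0$. The scalar-curvature hypothesis is equivalent to $\Delta u+\tfrac{n-2}2|\nabla u|^{2}\le0$ near infinity, and the identity $\Delta\!\big(e^{\frac{n-2}2 u}\big)=\tfrac{n-2}2\,e^{\frac{n-2}2 u}\big(\Delta u+\tfrac{n-2}2|\nabla u|^{2}\big)$ shows that the positive function $e^{\frac{n-2}2 u}$ is superharmonic on an exterior domain; this yields an a priori polynomial upper bound for $u$, hence for $h$, so the Liouville theorem for polyharmonic functions forces $h$ to be a polynomial of degree $\le n-1$. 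Moreover $h$ must be \emph{constant}: if $h$ were non-constant of degree $d\ge1$, then, since $\nabla v=O(|x|^{-1})$ and $\Delta v\to0$, in the directions where $|\nabla h|$ reaches its maximal order $|x|^{d-1}$ one would have
\[
\Delta u+\tfrac{n-2}2|\nabla u|^{2}=\tfrac{n-2}2|\nabla h|^{2}+O\!\big(|x|^{d-2}\big)+o(1)>0
\]
for $|x|$ large, contradicting $\liminf_{|x|\to\infty}S_{g,n}\ge0$. Hence $h\equiv c$ for a constant $c$, and feeding $u=v+c$ back into the first-step estimates and sharpening them yields the \emph{asymptotic lemma}: $u(x)=c-\alpha\log|x|+o(1)$ and $\nabla u(x)=-\alpha\,x/|x|^{2}+o(|x|^{-1})$, uniformly as $|x|\to\infty$.

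The bound (\ref{eq10}) now follows from completeness: by the asymptotic lemma $e^{u(x)}\le C|x|^{-\alpha}$ for $|x|\ge R_0$, so for every $x$ the $g$-length of the Euclidean segment $[o,x]$ gives $d_g(o,x)\le\int_0^{|x|}e^{u(tx/|x|)}\,dt\le M_0+C\int_{R_0}^{|x|}t^{-\alpha}\,dt$ with $R_0,M_0$ independent of $x$; were $\alpha>1$, the right-hand side would be bounded, so $(\mathbb R^n,g)$ would have finite diameter — impossible for a complete metric on the non-compact manifold $\mathbb R^n$ (closed balls are compact, by Hopf--Rinow). Thus $\alpha\le1$, i.e. $\int_{\mathbb R^n}Q_{g,n}\,dv_g=\gamma_n\alpha\le 2^{n-1}\Gamma(n/2)\pi^{n/2}$. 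For (\ref{eq11a}), assume first $\alpha<1$. Inserting $u(x)=c-\alpha\log|x|+o(1)$ into the definitions, using $\mathcal H^{n-1}(\partial B_r(o))=n\omega_n r^{n-1}$, and noting the contribution of bounded $|x|$ is negligible,
\[
s_g\big(\partial B_r(o)\big)=n\omega_n e^{(n-1)c}\,r^{(n-1)(1-\alpha)}(1+o(1)),\qquad v_g\big(B_r(o)\big)=\frac{\omega_n e^{nc}}{1-\alpha}\,r^{n(1-\alpha)}(1+o(1)) .
\]
Since $\big(n\omega_n\big)^{n/(n-1)}=n^{n/(n-1)}\omega_n^{n/(n-1)}$ and $\big(n\omega_n^{1/n}\big)^{n/(n-1)}=n^{n/(n-1)}\omega_n^{1/(n-1)}$, the powers of $r$ and the factors $e^{nc}$ cancel, whence
\[
\lim_{r\to\infty}\frac{\big(s_g(\partial B_r(o))\big)^{n/(n-1)}}{\big(n\omega_n^{1/n}\big)^{n/(n-1)}\,v_g(B_r(o))}=1-\alpha=1-\frac{\int_{\mathbb R^n}Q_{g,n}\,dv_g}{2^{n-1}\Gamma(n/2)\pi^{n/2}} ,
\]
which is (\ref{eq11a}); in the remaining case $\alpha=1$ both sides vanish, the left one because then $v_g(B_r(o))\sim n\omega_n e^{nc}\log r\to\infty$ while $s_g(\partial B_r(o))\to n\omega_n e^{(n-1)c}$ stays bounded.

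The main obstacle is the asymptotic lemma of the second paragraph. Promoting $h$ from polyharmonic to polynomial rests on the a priori growth bound coming from the superharmonicity of $e^{\frac{n-2}2 u}$, and improving the potential estimates from $o(\log|x|)$ to a uniform $o(1)$ (and to $\Delta v\to0$) requires a careful analysis of the kernel $\log\frac{|y|}{|x-y|}$ and of its derivatives against the $L^1$ density $f$. These points — along with the very meaning of $(-\Delta)^{n/2}u$ for the a priori only mildly controlled $u$, and the corresponding potential representation — are the genuinely delicate part, the more so when $n$ is odd, where $(-\Delta)^{n/2}$ is nonlocal.
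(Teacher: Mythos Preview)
Your overall strategy---establish normality $u=v+c$ with $v$ the logarithmic potential of $f=(-\Delta)^{n/2}u$, extract the asymptotics of $v$, and read off both conclusions---is precisely the paper's (and Chang--Qing--Yang's, Fang's). Two steps in your execution, however, do not go through as written.

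\medskip
\emph{The Liouville step.} From the superharmonicity of $w:=e^{\frac{n-2}{2}u}$ on an exterior domain you get only that the spherical mean $\bar w(r)$ stays bounded (since $(r^{n-1}\bar w'(r))'\le0$ and $w>0$), hence by Jensen $\bar u(r)\le C$. This is a one-sided bound on the \emph{radial average}, not a pointwise or two-sided bound on $u$ or on $h=u-v$; it does not license any Liouville theorem forcing $h$ to be a polynomial. The paper avoids this altogether: it symmetrizes about an arbitrary center $x_0$, applies the radial case to conclude that $\overline{u-v}(\,\cdot\,;x_0)$ is constant, hence $\Delta h(x_0)=0$ for every $x_0$; then, with $h$ now \emph{harmonic}, it bounds $|\nabla h(x_0)|$ by the spherical means of $|\nabla u|^2$ (controlled through the scalar-curvature hypothesis and the identity $\Delta u=\Delta v$) and of $|\nabla v|^2$, obtaining $|\nabla h(x_0)|=O(r^{-1})$ and therefore $h\equiv\text{const}$ (Prop.~3.1(i)). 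Your contradiction argument for ruling out non-constant polynomials is fine once $h$ is known to be polynomial; the gap is getting there.

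\medskip
\emph{The sharp asymptotic.} Your ``asymptotic lemma'' $u(x)=c-\alpha\log|x|+o(1)$ uniformly is stronger than the hypotheses give, and your derivation of (\ref{eq11a}) rests on it. Even in the radial case, what one actually gets (and what the paper proves, Prop.~2.1(ii),(vi)) is $\lim_{r\to\infty} r\,u'(r)=-\alpha$, which integrates only to $u(r)=-\alpha\log r+o(\log r)$; the remainder need not be bounded (e.g.\ take $f\ge0$ with $f\in L^1$ but $\int \log|y|\,f(y)\,dy=\infty$). Consequently your multiplicative errors in $s_g(\partial B_r)$ and $v_g(B_r)$ are really of size $r^{o(1)}$, not $1+o(1)$, and the limit does not follow. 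The paper never claims pointwise $o(1)$: it shows instead that the sphere integrals of $e^{pu}$ are asymptotic to those of $e^{p\bar u}$ (Prop.~3.1(ii)), reducing (\ref{eq11a}) to the radial metric $\bar g=e^{2\bar u}g_0$, and there computes the isoperimetric ratio by L'H\^opital using only $\lim_{r\to\infty} r\,\bar u'(r)=-\alpha$ (Props.~4.1, 5.1). Your completeness argument for (\ref{eq10}) does survive with the weaker exponent asymptotic $e^{\bar u(r)}=r^{-\alpha+o(1)}$, but again only after passing to the radial average, since pointwise control along a single ray is unavailable.
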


Perhaps it is appropriate to make two remarks. The first one is that
(\ref{eq10}) gives an explicit upper bound (i.e.,
$2^{n-1}\Gamma(n/2)\pi^{n/2}$) of the total Q-curvature
$\int_{\mathbb R^n}Q_{g,n}\,dv_g$ in all dimensions $n\ge 3$, but
also can be used to confirm that \cite[Theorem 1.3]{BoHeSa} has an
odd-dimensional analogue -- that is -- for each odd number $n\ge 3$
there is a dimensional constant $L_n\ge 1$ such that every manifold
$(\mathbb R^n,g)$ is $L_n$-biLipschitz equivalent to the background
manifold $(\mathbb R^n,g_0)$ provided that $u\in C^\infty(\mathbb
R^n)$ satisfies:
$$
u(x)=\hbox{constant}+\big(2^{n-1}\Gamma(n/2)\pi^{n/2}\big)^{-1}\int_{\mathbb
R^n}\big(\log|y|/|x-y|\big)(-\Delta)^{n/2}u(y)\,d\mathcal{H}^n(y)
$$
and
$$
\big(2^{n-1}\Gamma(n/2)\pi^{n/2}\big)^{-1}\int_{\mathbb
R^n}|(-\Delta)^{n/2}u|\,d\mathcal{H}^n<n
2^{-(7+4n)}e^{-4n(n-1)}3^{-2n}<1.
$$
The second one is that as the geometrical isoperimetric deficit of
(\ref{eq10}) the generalized Gauss-Bonnet-Chern formula
(\ref{eq11a}) (which is unknown until now except $n=2,4$) has
suggested a geometric meaning of the so-called Q-curvature of any
$3\le n$-dimensional manifold $(\mathbb R^n,g)$ -- see also
\cite{Pet} for Chang's question on the geometric content of
Q-curvature as well as Yang's study plan on Q-curvature in
odd-dimensions (for which the Gauss-Bonnet-Chern formula does not
hold). To better understand this suggestion, a dedicated
investigation of the version of (\ref{eq11a}) over a complete $3\le
n$-manifold with only finitely many conformally flat simple ends
(extending two/four-dimensional results in \cite{Fi}/\cite{ChQiYa1}
and settling the equality case for the even-dimensional inequality
in \cite[Theorem 1.1]{Fa}) is worth being carried out -- after this
paper was completed, we were informed that X. Xu did this thing in
\cite{Xu2} plus proving Theorem \ref{t1} by other methods.

The proof of Theorem \ref{t1} is allocated to the forthcoming four
sections. Our argument techniques and methods (working for all
dimensions bigger than or equal to three; see also \cite{Nd} and
\cite{Xu}) come mainly from harmonic analysis based on the radially
symmetric integral estimates and calculations -- for example in
Proposition \ref{l1} (i)-(ii)-(iii) (for $\mathbb R^n$, $n\ge 3$) of
this paper there is no need to solve some induced ordinary
differential equations such as ones treated in
\cite[pp.526-531]{ChQiYa} (for $\mathbb R^4$) and \cite[p.478]{Fa}
(for $\mathbb R^{2m}$) -- this direct approach makes our work be
initially like no theirs. Here we want to acknowledge several
helpful communications with M. Bonk, H. Fang, R. Graham, J. Li and X. Xu. Moreover, we are grateful to A. Malchiodi and the referee for their nice suggestions on the paper.

\section{Proof of (\ref{eq10}) -- Special Case}

In this section we provide a proof of (\ref{eq10}) for the smooth
radially symmetric function.

\begin{proposition}\label{l1} Let $u\in C^\infty(\mathbb R^n)$ be radially
symmetric and satisfy the hypotheses of Theorem \ref{t1}. If
$$
v(x)=\frac{1}{2^{n-1}\Gamma(n/2)\pi^{n/2}}\int_{\mathbb
R^n}\Big(\log\frac{|y|}{|x-y|}\Big)(-\Delta)^{n/2}u(y)\,
d\mathcal{H}^n(y),
$$
then:

\item{\rm (i)}
$$
\sup_{0<|x|, |y|<\infty}\frac{1}{\mathcal{H}^{n-1}\big(\partial
B_{|x|}(o)\big)}\int_{\partial
B_{|x|}(o)}\frac{{\big||z|^2-|y|^2\big|}}{|z-y|^2}d\mathcal{H}^{n-1}(z)<\infty.
$$

\item{\rm(ii)} $v$ is also radially symmetric and enjoys
$$
\lim_{r\to 0}r \frac{dv(r)}{dr}=0\quad\hbox{and}\quad\lim_{r\to
\infty}r\frac{dv(r)}{dr}=-\frac{1}{2^{n-1}\Gamma(n/2)\pi^{n/2}}\int_{\mathbb
R^n}(-\Delta)^{n/2}u\, d\mathcal{H}^n.
$$

\item{\rm(iii)} $\limsup_{|x|\to\infty}|x||\nabla v(x)|<\infty$ and $\limsup_{|x|\to\infty}|x|^2|\Delta v(x)|<\infty$.

\item{\rm (iv)} In the sense of distribution,
$$
(-\Delta)^{n/2}(-\log|x-y|)=2^{n-1}\Gamma(n/2)\pi^{n/2}\delta_y(x),
$$
where $\delta_y(\cdot)$ is the Dirac measure at $y$.

\item{\rm(v)} There is a constant $c$ such that $u(x)=v(x)+c$ for all
$x\in\mathbb R^n$.

\item{\rm(vi)} (\ref{eq10}) holds.
\end{proposition}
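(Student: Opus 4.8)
The plan is to prove Proposition~\ref{l1} item by item, since (vi) will follow once (i)--(v) are in hand. For part (i), the key observation is that after the rotation/scaling $z = |x|\zeta$ with $\zeta$ ranging over the unit sphere $\partial B_1(o)$, the integral becomes $\int_{\partial B_1(o)} \bigl||x|^2 - |y|^2\bigr|\,\bigl||x|\zeta - y\bigr|^{-2}\,d\mathcal{H}^{n-1}(\zeta)$, normalized by $\mathcal{H}^{n-1}(\partial B_1(o))$. Writing $t = |y|/|x|$, this reduces to bounding $|1 - t^2|\int_{\partial B_1(o)}|\zeta - t e|^{-2}\,d\mathcal{H}^{n-1}(\zeta)$ uniformly in $t \ge 0$ for a fixed unit vector $e$; the spherical integral is finite for each $t$ (the singularity $|\zeta - te|^{-2}$ is integrable on the sphere since $n-1 \ge 2$), tends to a finite limit as $t \to 1$ after multiplication by $|1-t^2|$, and decays like $t^{-2}$ as $t \to \infty$, so the supremum is finite. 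I would make this rigorous by splitting into the regions $t \le 1/2$, $1/2 \le t \le 2$, and $t \ge 2$, handling the middle (near-diagonal) region with an explicit estimate of the sphere integral near $\zeta = e$.

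For part (ii), radial symmetry of $v$ follows by averaging the defining integral over rotations, using that $(-\Delta)^{n/2}u$ is radial; then $v(r) = v(x)$ for $|x|=r$ and one computes $r\,dv/dr = x\cdot\nabla v(x)$ by differentiating under the integral sign, which is legitimate once (i) controls the resulting kernel. Explicitly, $x\cdot\nabla_x\log(|y|/|x-y|) = -x\cdot(x-y)/|x-y|^2 = -\tfrac{1}{2}(|x|^2 - |y|^2 + |x-y|^2)/|x-y|^2$, and averaging over $\partial B_{|x|}(o)$ and invoking (i) together with the dominated convergence theorem (using the $L^1$ hypothesis on $Q_{g,n}\,dv_g = (-\Delta)^{n/2}u\,d\mathcal{H}^n$) gives both limits: as $r \to 0$ the $|x|^2$ and cross terms vanish in the limit leaving a contribution proportional to $r^2 \to 0$ after one more careful estimate, and as $r \to \infty$ the kernel tends to $-\tfrac12$ pointwise, yielding the stated integral. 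Part (iii) is obtained similarly: $|x||\nabla v(x)|$ and $|x|^2|\Delta v(x)|$ are each controlled by spherical averages of kernels of the same homogeneity as in (i), so the same three-region estimate bounds them for large $|x|$. Part (iv) is the standard fundamental-solution identity for $(-\Delta)^{n/2}$; for $n$ even it is classical, and for $n$ odd one verifies it via the Fourier transform of $|x|^{-n}$ (the Riesz-kernel computation), the constant $2^{n-1}\Gamma(n/2)\pi^{n/2}$ being exactly what makes $-\log|x|$ the fundamental solution.

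The crux is part (v): showing $u - v$ is a constant. By (iv), $w := u - v$ satisfies $(-\Delta)^{n/2}w = 0$ in the distributional sense, so $w$ is polyharmonic of order $n/2$ (when $n$ is even this is immediate; when $n$ is odd one argues that $(-\Delta)^{n/2}w=0$ forces, via elliptic regularity applied to $\Delta^{\lceil n/2\rceil}$ or by a direct Fourier argument, that $w$ is in fact a polynomial). The growth control from (iii) on $v$, combined with the completeness of $g = e^{2u}g_0$ and the hypothesis $\liminf_{|x|\to\infty}S_{g,n}(x)\ge 0$ near infinity, pins down the growth of $u$: completeness forces $u$ not to decay too fast along rays, while the scalar-curvature sign condition, written out as $\Delta u + \tfrac{n-2}{2}|\nabla u|^2 \le o(e^{2u})$, forces $u$ not to grow too fast. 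A radial polyharmonic function is a linear combination of $r^{2k}$ and $r^{2k}\log r$ (and, for odd $n$, possibly $r^{2k+?}$ terms must be excluded by the regularity at the origin); matching against the at-most-logarithmic two-sided growth extracted from completeness plus the curvature condition eliminates all terms except the constant. I expect this step --- converting the two geometric hypotheses into sharp enough two-sided pointwise bounds on $u$, and then ruling out every nonconstant radial polyharmonic function --- to be the main obstacle, since it is where the analysis (elliptic/ODE asymptotics) meets the geometry (completeness and curvature). Once (v) holds, (vi) is immediate: $\int_{\mathbb R^n}(-\Delta)^{n/2}u\,d\mathcal{H}^n = \int (-\Delta)^{n/2}v\,d\mathcal{H}^n$, and by (ii) this integral equals $-2^{n-1}\Gamma(n/2)\pi^{n/2}\lim_{r\to\infty} r\,dv(r)/dr$; since $v(r)$ is eventually monotone (being, up to constant, $u(r)$, which by completeness must not tend to $-\infty$ faster than logarithmically) one shows $\lim_{r\to\infty} r\,dv/dr \ge -1$, i.e. the total integral is $\le 2^{n-1}\Gamma(n/2)\pi^{n/2}$, which is exactly \eqref{eq10}.
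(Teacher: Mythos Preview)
Your outline for (i), (ii), (iv), and (vi) is sound; in (i) your scaling to a single parameter $t=|y|/|x|$ is a clean alternative to the paper's dyadic decomposition of $\partial B_{|x|}(o)$, and (ii), (iv), (vi) match the paper's arguments.

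Two places need attention. In (iii), the claim that $|x|^2|\Delta v(x)|$ is handled by the same estimate as (i) fails for $n=3$: after averaging, you must bound $\int_{\partial B_1(o)}|\zeta-te|^{-2}\,d\mathcal H^{n-1}(\zeta)$ \emph{without} the factor $|1-t^2|$ that rescued (i), and for $n=3$ this integral diverges logarithmically as $t\to 1$. The paper treats $n=3$ separately, showing that the near-diagonal piece $\int_{|x|/2\le|y|\le 2|x|}|x|^2|x-y|^{-2}|(-\Delta)^{3/2}u|\,d\mathcal H^3$ tends to zero via an integration-by-parts in the radial variable together with the vanishing $L^1$-tail of $(-\Delta)^{3/2}u$; a pure kernel bound cannot give this.

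In (v) your route differs from the paper's and, as written, has a gap. The scalar-curvature condition in the form you wrote, $\Delta u+\tfrac{n-2}{2}|\nabla u|^2\le o(e^{2u})$, does \emph{not} force $u$ to grow at most logarithmically: take $u=r^2$ on $\mathbb R^5$, which is complete, has $(-\Delta)^{5/2}u\equiv 0$ and $S_{g,5}\to 0^{-}$, yet $u-v=r^2$. One must read the hypothesis in its strong form $S_{g,n}\ge 0$ near infinity, i.e.\ $\Delta u+\tfrac{n-2}{2}|\nabla u|^2\le 0$ for large $|x|$; under that reading your two-sided squeeze (scalar curvature caps growth, completeness caps decay) does go through. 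The paper's argument is more direct and does not invoke completeness in (v) at all: once $u-v$ equals a nonconstant radial polynomial $p(r)$ (singular terms having been excluded via smoothness at the origin and $\lim_{r\to 0}r\,v'(r)=0$), part (iii) gives $|x||\nabla v|+|x|^2|\Delta v|=O(1)$, whence
\[
\Delta u+\tfrac{n-2}{2}|\nabla u|^2 \ =\ \tfrac{n-2}{2}\,\bigl(p'(r)\bigr)^{2}\ +\ \text{lower order}\ \longrightarrow\ +\infty,
\]
which is eventually strictly positive regardless of the sign of the leading coefficient of $p$ and contradicts $S_{g,n}\ge 0$ near infinity directly. Completeness enters only in (vi), exactly as you describe.
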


\begin{proof} (i) Given $x,y\in\mathbb R^n$, for simplicity we not only assume
$$
I(|x|,|y|)=\frac{1}{\mathcal{H}^{n-1}\big(\partial
B_{|x|}(o)\big)}\int_{\partial
B_{|x|}(o)}\frac{\big||z|^2-|y|^2\big|}{|z-y|^2}d\mathcal{H}^{n-1}(z),
$$
but also split $\partial B_{|x|}(o)$ into two disjoint parts $P_1$ and $P_2$, where
$$
P_1=\big\{z\in \partial B_{|x|}(o):\ |x|^2+|y|^2\le|z-y|^2\big\}
$$
and
$$
P_2=\big\{z\in \partial B_{|x|}(o):\ |x|^2+|y|^2>|z-y|^2\big\}.
$$
Due to the structure of $P_2$, we further write $P_2$ as the
union of countable disjoint sets as follows:
$$
P_2=\bigcup_{k\ge 0}\Big\{z\in\partial B_{|x|}(o):\,\
2^{-k-1}\le\frac{|z-y|}{\sqrt{|x|^2+|y|^2}}<2^{-k}\Big\}.
$$
Based on the spherical coordinate system on $P_2$ and
the law of cosines for the triangle formed by vectors $z\in P_2$,
$y$ and $z-y$, we define
$$
\phi_k=\arccos\frac{(1-2^{-2k})(|x|^2+|y|^2)}{2|x||y|},\quad
k=0,1,2,...,\left[\frac{\log\frac{(|x|-|y|)^2}{|x|^2+|y|^2}}{\log\frac14}\right].
$$

After the above technical treatment, we now need to deal with two
cases $n=3$ and $n>3$ respectively.

 {\it Case 1: $n=3$.} Under this case, we put
$$
H(|x|,|y|)=\frac{1}{\mathcal{H}^{2}\big(\partial B_{|x|}(o)\big)}\int_{\partial
B_{|x|}(o)}|z-y|^{-1}\,d\mathcal{H}^{2}(z)
$$
and then prove
$$
\sup_{x,y\in\mathbb R^3}(|x|+|y|)H(|x|,|y|)<\infty
$$
through handling two subcases.

{\it Subcase 1: $|y|\le |x|$.} When $|y|\le|x|/2$, we obviously have that $|z-y|\ge|z|-|y|\ge|x|/2$ as $z\in\partial B_{|x|}(o)$ and so that
$$
(|x|+|y|)H(|x|,|y|)\lesssim|x|^{-1}\int_{\partial
B_{|x|}(o)}|z-y|^{-1}\,d\mathcal{H}^{2}(z)\lesssim 1.
$$
Suppose now $|x|/2<|y|\le |x|$. Then we use $\partial B_{|x|}(o)=P_1\cup P_2$ and $\phi_k$ to estimate

\begin{eqnarray*}
&&(|x|+|y|)H(|x|,|y|)\\
&&\lesssim |x|^{-1}\left(\int_{P_1}|z-y|^{-1}\,d\mathcal{H}^{2}(z)+\int_{P_2}|z-y|^{-1}\,d\mathcal{H}^{2}(z)\right)\\
&&\lesssim 1+|x|^{-1}\sum_{k\ge 0}\int_{\{z\in\partial
B_{|x|}(o):\ 2^{-k-1}\le\frac{|z-y|}{\sqrt{|x|^2+|y|^2}}<2^{-k}\}}|z-y|^{-1}\,d\mathcal{H}^{2}(z)\\
&&\lesssim 1+\sum_{k\ge 0}2^k\int_{\phi_{k+1}}^{\phi_k}\sin\phi\,d\phi\\
&&\lesssim 1+\sum_{k\ge 0}2^{-k}.
\end{eqnarray*}

{\it Subcase 2: $|y|>|x|$.} When $|y|\ge 2|x|$, we similarly have that $|z-y|\ge|y|-|z|\ge|y|/2$ as $z\in\partial B_{|x|}(o)$, and so that
$$
(|x|+|y|)H(|x|,|y|)\lesssim\frac{|y|}{|x|^2}\int_{\partial
B_{|x|}(o)}|z-y|^{-1}\,d\mathcal{H}^{2}(z)\lesssim 1.
$$
If $|x|<|y|<2|x|$, then

\begin{eqnarray*}
&&(|x|+|y|)H(|x|,|y|)\\
&&\lesssim |y|^{-1}\left(\int_{P_1}|z-y|^{-1}\,d\mathcal{H}^{2}(z)+\int_{P_2}|z-y|^{-1}\,d\mathcal{H}^{2}(z)\right)\\
&&\lesssim 1+|y|^{-1}\sum_{k\ge 0}\int_{\{z\in\partial B_{|x|}(o):\ 2^{-k-1}\le\frac{|z-y|}{\sqrt{|x|^2+|y|^2}}<2^{-k}\}}|z-y|^{-1}\,d\mathcal{H}^{2}(z)\\
&&\lesssim 1+\sum_{k\ge 0}2^k\int_{\phi_{k+1}}^{\phi_k}\sin\phi\,d\phi\\
&&\lesssim 1+\sum_{k\ge 0}2^{-k}.
\end{eqnarray*}

The previous consideration of two subcases, plus the inequality
$$
\big||z|^2-|y|^2\big|/|z-y|^2\le(|z|+|y|)/|z-y|,
$$
leads to
$$
\sup_{0<|x|,|y|<\infty}I(|x|,|y|)\lesssim\sup_{x,y\in\mathbb R^3}(|x|+|y|)H(|x|,|y|)<\infty.
$$

{\it Case 2: $n>3$.} Under this case, we set
$$
J(|x|,|y|)=\frac{1}{\mathcal{H}^{n-1}\big(\partial
B_{|x|}(o)\big)}\int_{\partial
B_{|x|}(o)}{|z-y|^{-2}}d\mathcal{H}^{n-1}(z)
$$
and are about to show
$$
\sup_{x,y\in\mathbb R^n}(|x|^2+|y|^2)J(|x|,|y|)<\infty
$$
via handling two more subcases.

{\it Subcase 1: $|y|\le |x|$.} When $|y|\le |x|/2$, we have that $z\in\partial B_{|x|}(o)$ implies $|z-y|\ge|z|-|y|\ge|x|/2$ and consequently,
$$
(|x|^2+|y|^2)J(|x|,|y|)\lesssim|x|^2J(|x|,|y|)\lesssim
|x|^{1-n}\int_{\partial
B_{|x|}(o)}\frac{|x|^2}{|z-y|^2}d\mathcal{H}^{n-1}(z)\lesssim 1.
$$
When $|x|/2<|y|\le |x|$, we continue using the spherical coordinate system to produce
\begin{eqnarray*}
&&(|x|^2+|y|^2)J(|x|,|y|)\\
&&\lesssim|x|^2J(|x|,|y|)\\
&&\lesssim|x|^{1-n}\left(\int_{P_1}\frac{|x|^2}{|z-y|^2}\,d\mathcal{H}^{n-1}(z)+\int_{P_2}\frac{|x|^2}{|z-y|^2}\,d\mathcal{H}^{n-1}(z)\right)\\
&&\lesssim 1+|x|^{1-n}\int_{P_2}\frac{|x|^2}{|z-y|^2}d\mathcal{H}^{n-1}(z)\\
&&\lesssim 1+|x|^{3-n}\sum_{k\ge 0}\int_{\{z\in\partial
B_{|x|}(o):\ 2^{-k-1}\le\frac{|z-y|}{\sqrt{|x|^2+|y|^2}}<2^{-k}\}}|z-y|^{-2}\,d\mathcal{H}^{n-1}(z)\\
&&\lesssim 1+|x|^{3-n}\sum_{k\ge 0}\int_{\phi_{k+1}}^{\phi_k}(|x|^2+|y|^2)^{-1}2^{2k}|x|^{n-1}\sin^{n-2}\phi\,d\phi\\
&&\lesssim 1+\sum_{k\ge 0}
2^{2k}\int_{\phi_{k+1}}^{\phi_k}\sin^{n-2}\phi
\,d\phi\\
&&\lesssim1+\sum_{k\ge 0} 2^{2k}\int_{\phi_{k+1}}^{\phi_k}(1-\cos^2\phi)^{(n-3)/2}\,d(-\cos\phi)\\
&&\lesssim1+\sum_{k\ge 0}
2^{2k}\int_{\frac{(1-2^{-2k})(|x|^2+|y|^2)}{2|x||y|}}^{\frac{(1-2^{-2(k+1)})(|x|^2+|y|^2)}{2|x||y|}}(1-t^2)^{(n-3)/2}dt\\
&&\lesssim1+\sum_{k\ge 0} 2^{2k}\left(1-\Big(\frac{(1-2^{-2k})(|x|^2+|y|^2)}{2|x||y|}\Big)^2\right)^{(n-3)/2}\Big(\frac{2^{-2k}(|x|^2+|y|^2)}{|x||y|}\Big)\\
&&\lesssim1+\sum_{k=0}^\infty 2^{-k(n-3)}.
\end{eqnarray*}

{\it Subcase 2: $|y|>|x|$.} When $|y|\ge 2|x|$, we clearly see that $z\in\partial B_{|x|}(o)$ implies $|z-y|\ge|y|-|z|\ge|y|/2$ and consequently,
$$
(|x|^2+|y|^2)J(|x|,|y|)\lesssim|y|^2J(|x|,|y|)\lesssim|x|^{1-n}\int_{\partial
B_{|x|}(o)}\frac{|y|^2}{|z-y|^2}d\mathcal{H}^{n-1}(z)\lesssim 1.
$$
When $|x|<|y|<2|x|$, we analogously derive

\begin{eqnarray*}
&&(|x|^2+|y|^2)J(|x|,|y|)\\
&&\lesssim|y|^2J(|x|,|y|)\\
&&\lesssim |x|^{1-n}\left(\int_{P_1}\frac{|y|^2}{|z-y|^2}d\mathcal{H}^{n-1}(z)+\int_{P_2}\frac{|y|^2}{|z-y|^2}d\mathcal{H}^{n-1}(z)\right)\\
&&\lesssim 1+|x|^{1-n}\sum_{k\ge 0}\int_{\{z\in\partial B_{|x|}(o):\ 2^{-(k+1)}\le\frac{|z-y|}{\sqrt{|x|^2+|y|^2}}<2^{-k}\}}\frac{|y|^2}{|z-y|^2}\,d\mathcal{H}^{n-1}(z)\\
&&\lesssim1+\sum_{k\ge 0}
\frac{2^{2k}|y|^2}{|x|^2+|y|^2}\int_{\phi_{k+1}}^{\phi_k}\sin^{n-2}\phi\,
d\phi\\
&&\lesssim1+\sum_{k\ge 0}2^{-k(n-3)}.
\end{eqnarray*}

Taking the foregoing inequalities for $J(|x|,|y|)$ into account, we get the desired finiteness:
$$
\sup_{0<|x|,|y|<\infty}I(|x|,|y|)\le\sup_{x,y\in\mathbb R^n}(|x|^2+|y|^2)J(|x|,|y|)<\infty.
$$

(ii) The radial symmetry of $v$ follows easily from the assumption
that $u$ is radially symmetric. Using $|x|=r>0$ we calculate

$$
\frac{d}{dr}\log\frac{|y|}{|x-y|}=\frac{-\frac{d}{dr}|x-y|^2}{2|x-y|^2}=\frac{|y|^2-|x|^2-|x-y|^2}{2|x||x-y|^2}
$$
and then employ the radial symmetry of $u$ to obtain
\begin{eqnarray*}
&&-{2^{n}\Gamma(n/2)\pi^{n/2}}\Big(r\frac{dv(r)}{dr}\Big)\\
&&=\int_{\mathbb
R^n}\Big(\frac{|x|^2-|y|^2+|x-y|^2}{|x-y|^2}\Big)(-\Delta)^{n/2}u(y)\,
d\mathcal{H}^n(y)\\
&&=\int_{\mathbb R^n}\left(1+\frac{\int_{\partial B_{|x|}(o)}
\frac{|z|^2-|y|^2}{|z-y|^2}\,d\mathcal{H}^{n-1}(z)}{\mathcal{H}^{n-1}\big(\partial
B_{|x|}(o)\big)}\right)(-\Delta)^{n/2}u(y)\, d\mathcal{H}^n(y).
\end{eqnarray*}
Because both (i) and
$$
\int_{\mathbb R^n}|(-\Delta)^{n/2}u|d\mathcal{H}^n=\int_{\mathbb
R^n}|Q_{g,n}|dv_g<\infty
$$
guarantee
\begin{eqnarray*}
&&\int_{\mathbb R^n}\left|\left(1+\frac{\int_{\partial B_{|x|}(o)}
\frac{|z|^2-|y|^2}{|z-y|^2}\,d\mathcal{H}^{n-1}(z)}{\mathcal{H}^{n-1}\big(\partial
B_{|x|}(o)\big)}\right)(-\Delta)^{n/2}u(y)\right|\,
d\mathcal{H}^n(y)\\
&&\lesssim\int_{\mathbb
R^n}\Big(1+\sup_{0<|x|,|y|<\infty}I(|x|,|y|)\Big)\big|(-\Delta)^{n/2}u(y)\big|\,
d\mathcal{H}^n(y)\\
&&\lesssim\int_{\mathbb
R^n}\big|(-\Delta)^{n/2}u\big|d\mathcal{H}^n<\infty,
\end{eqnarray*}
we apply the dominated convergence theorem to derive
\begin{eqnarray*}
&&\lim_{r\to
0}r\frac{dv(r)}{dr}\\
&&=-\frac{1}{2^{n}\Gamma(n/2)\pi^{n/2}}\int_{\mathbb
R^n}\lim_{|x|\to
0}\Big(\frac{|x|^2-|y|^2+|x-y|^2}{|x-y|^2}\Big)(-\Delta)^{n/2}u(y)\,
d\mathcal{H}^n(y)\\
&&=0
\end{eqnarray*}
and
\begin{eqnarray*}
&&\lim_{r\to
\infty}r\frac{dv(r)}{dr}\\
&&=-\frac{1}{2^{n}\Gamma(n/2)\pi^{n/2}}\int_{\mathbb
R^n}\lim_{|x|\to
\infty}\Big(\frac{|x|^2-|y|^2+|x-y|^2}{|x-y|^2}\Big)(-\Delta)^{n/2}u(y)\,
d\mathcal{H}^n(y)\\
&&=-\frac{1}{2^{n-1}\Gamma(n/2)\pi^{n/2}}\int_{\mathbb
R^n}(-\Delta)^{n/2}u\, d\mathcal{H}^n,
\end{eqnarray*}
as required.

(iii) The first finiteness follows from (ii) right away since
$\nabla$ can rewritten as $(d/dr, r^{-1}\nabla_\sigma)$ under the
spherical coordinate system where $\nabla_\sigma$ is the gradient
operator on the unit sphere $\partial B_1(o)$. To verify the second
finiteness, we observe (via an easy computation)
$$
\Delta v(x)=\frac{n-2}{2^{n-1}\Gamma(n/2)\pi^{n/2}}\int_{\mathbb
R^n}|x-y|^{-2}(-\Delta)^{n/2}u(y)\, d\mathcal{H}^n(y),
$$
and then handle two cases.

{\it Case 1: $n=3$.} From the hypotheses and the spherical coordinate
system it follows that
\begin{eqnarray*}
&&|x|^2|\Delta v(x)|\\
&&\lesssim\left(\int_{\{y\in\mathbb R^3:\
|y-x|\ge|x|/2\}}+\int_{\{y\in\mathbb R^3:\
|y-x|<|x|/2\}}\right)\frac{|x|^2}{|x-y|^2}|(-\Delta)^{3/2}u(y)|\,
d\mathcal{H}^3(y)\\
&&\lesssim\int_{\mathbb R^3}|(-\Delta)^{3/2}u|\,
d\mathcal{H}^3+\int_{\{y\in\mathbb R^3:\ |y|\ge
|x|/2\}}\frac{|x|^2}{|x-y|^2}|(-\Delta)^{3/2}u(y)|\,
d\mathcal{H}^3(y)\\
&&\lesssim\int_{\mathbb R^3}|(-\Delta)^{3/2}u|\,
d\mathcal{H}^3+\int_{\{y\in\mathbb R^3:\ 2|x|\ge|y|\ge|x|/2\}}\frac{|x|^2}{|x-y|^2}|(-\Delta)^{3/2}u(y)|\,d\mathcal{H}^3(y).\\
\end{eqnarray*}
Furthermore, via the spherical coordinate system we deduce

\begin{eqnarray*}
&&|x|^2\int_{\{y\in\mathbb R^3:\ |x|\ge|y|\ge|x|/2\}}|x-y|^{-2}|(-\Delta)^{3/2}u(y)|\,d\mathcal{H}^3(y)\\
&&\lesssim|x|^2\int_{|x|/2}^{|x|}|(-\Delta)^{3/2}u(t)|t^2\left(\int_0^{\pi/2}\frac{\sin\phi}{|x|^2-2|x|t\cos\phi+t^2}\,d\phi\right)dt\\
&&\lesssim|x|^2\int_{|x|/2}^{|x|}|(-\Delta)^{3/2}u(t)|t^2\left(\int_0^1\frac{ds}{|x|^2-2|x|ts+t^2}\right)dt\\
&&\lesssim\int_{\mathbb
R^3}|(-\Delta)^{3/2}u|d\mathcal{H}^3+\int_{|x|/2}^{|x|}|(-\Delta)^{3/2}u(t)|t^2\left(\log\frac{|x|^2+t^2}{(|x|-t)^2}\right)dt.
\end{eqnarray*}
Suppose now $U(t)=\int_0^t|(-\Delta)^{3/2}u(s)|s^2ds$ for $t>0$.
Integration by part and change of variables give
\begin{eqnarray*}
&&\int_{|x|/2}^{|x|}|(-\Delta)^{3/2}u(t)|t^2\left(\log\frac{|x|}{|x|-t}\right)\,dt\\
&&=-\int_0^{1/2}(\log s)\,d\big(U(|x|)-U(|x|-|x|s)\big)\\
&&=(\log
2)\big(U(|x|)-U(|x|/2)\big)+\int_0^{1/2}\big(U(|x|)-U(|x|-|x|s)\big)s^{-1}ds.
\end{eqnarray*}
Note that
$$
U(\infty)=\lim_{t\to\infty}U(t)\lesssim \int_{\mathbb R^3}|(-\Delta)^{3/2}u|\,d\mathcal{H}^3<\infty
$$
implies $\lim_{|x|\to\infty}\big(U(|x|)-U(|x|/2)\big)=0$. So the
limit-sup form of Fatou's lemma yields
\begin{eqnarray*}
&&0\le\limsup_{|x|\to\infty}\int_0^{1/2}\big(U(|x|)-U(|x|-|x|s)\big)s^{-1}ds\\
&&\le\int_0^{1/2}\limsup_{|x|\to\infty}\big(U(|x|)-U(|x|-|x|s)\big)s^{-1}ds=0.
\end{eqnarray*}
As a result, we get
$$
\limsup_{|x|\to\infty}|x|^2\int_{\{y\in\mathbb R^3:\
|x|\ge|y|\ge|x|/2\}}|x-y|^{-2}|(-\Delta)^{3/2}u(y)|\,d\mathcal{H}^3(y)=0.
$$
In a similar manner, we can also obtain
$$
\limsup_{|x|\to\infty}|x|^2\int_{\{y\in\mathbb R^3:\ |x|\le|y|\le
2|x|\}}|x-y|^{-2}|(-\Delta)^{3/2}u(y)|\,d\mathcal{H}^3(y)=0,
$$
thereby reaching
$$
\limsup_{|x|\to\infty}|x|^2|\Delta v(x)|\lesssim\int_{\mathbb R^3}|(-\Delta)^{3/2}u|\,
d\mathcal{H}^3<\infty.
$$

{\it Case 2: $n>3$.} Since $(-\Delta)^{n/2}u$ is radially symmetric, it
follows from the estimates on $J(|x|,|y|)$ that
\begin{eqnarray*}
&&|x|^2|\Delta v(x)|\\
&&\lesssim|x|^2\int_{\mathbb
R^n}\left(\frac{1}{\mathcal{H}^{n-1}\big(\partial
B_{|x|}(o)\big)}\int_{\partial
B_{|x|}(o)}\frac{d\mathcal{H}^{n-1}(z)}{|z-y|^2}\right)|(-\Delta)^{n/2}u(y)|d\mathcal{H}^n(y)\\
&&\lesssim\int_{\mathbb
R^n}|x|^2J(|x|,|y|)|(-\Delta)^{n/2}u(y)|d\mathcal{H}^n(y)\\
&&\lesssim\int_{\mathbb R^n}|(-\Delta)^{n/2}u|\,d\mathcal{H}^n,
\end{eqnarray*}
and so that
$$
\limsup_{|x|\to\infty}|x|^2|\Delta v(x)|\lesssim\int_{\mathbb R^n}|(-\Delta)^{n/2}u|\,d\mathcal{H}^n<\infty.
$$

(iv) In the even case this result may be found in \cite{BoHeSa}. A
proof of this case and odd one is provided below. Of course, it
suffices to verify the formula for $y=0$. Rewriting $\Delta$ in
terms of the spherical coordinate: $x=r\sigma$; $r>0$,
$\sigma\in\partial B_1(o)$, we have
$$
\Delta=\frac{d^2}{dr^2}+\Big(\frac{n-1}{r}\Big)\frac{d}{dr}+\frac{\Delta_\sigma}{r^2},
$$
where $\Delta_\sigma$ is the Laplacian on $\partial B_1(o)$. Since
$\log|x|$ is radially symmetric, if $n=2m$ is an even number, then a
simple calculation with the basic equation (see also \cite[p. 156,
(1)]{LieLo})
$$
(-\Delta)|x|^{2-n}=\frac{2(n-2)\pi^{n/2}}{\Gamma(n/2)}\delta_0(x)
$$
gives
\begin{eqnarray*}
&&(-\Delta)^{n/2}(-\log|x|)\\
&&=2\cdot 4\cdots
2(m-2)(2-n)(4-n)\cdots(2m-2-n)(-\Delta)|x|^{2-n}\\
&&=2^{n-1}\Gamma(n/2)\pi^{n/2}\delta_0(x).
\end{eqnarray*}
In the case that $n=2m-1$ is an odd number, a similar computation
yields
$$
(-\Delta)^{m-1}(-\log|x|)=2\cdot 4\cdots
2(m-2)(2-n)(4-n)\cdots(2m-2-n)|x|^{-2(m-1)}.
$$
According to \cite[p. 128, (2.10.1) \& (2.10.8)]{KiSrTr} and
\cite[p. 132, (3)]{LieLo}, we find
\begin{eqnarray*}
&&(-\Delta)^{-1/2}|x|^{-2(m-1)}\\
&&=\frac{\Gamma(n/2-1/2)}{2\pi^{(n+1)/2}}\int_{\mathbb R^n}|x-y|^{1-n}|y|^{2-2m}d\mathcal{H}^n(y)\\
&&=\Big(\frac{\sqrt{\pi}\Gamma(n/2-1)}{2\Gamma(n/2-1/2)}\Big)|x|^{2-n},
\end{eqnarray*}
whence obtaining (via the above-mentioned basic equation)
\begin{eqnarray*}
&&(-\Delta)^{n/2}(-\log|x|)\\
&&=(-\Delta)(-\Delta)^{-1/2}(-\Delta)^{m-1}(-\log|x|)\\
&&=2\cdot4\cdots(n-3)(2-n)(4-n)\cdots(-1)\Big(\frac{\sqrt{\pi}\Gamma(n/2-1)}{2\Gamma(n/2-1/2)}\Big)(-\Delta)|x|^{2-n}\\
&&=2^{n-1}\Gamma(n/2)\pi^{n/2}\delta_0(x).
\end{eqnarray*}

(v) From (iv) we see immediately that
$(-\Delta)^{n/2}v=(-\Delta)^{n/2}u$. To further get a constant $c$
such that $u=v+c$, we consider two situations.

{\it Situation 1: $n=2m-1$ is an odd integer.} Because
$(-\Delta)^{n/2}=(-\Delta)^{1/2}(-\Delta)^{m-1}$, $(-\Delta)^{n/2}v=(-\Delta)^{n/2}u$ yields $(-\Delta)^{1/2}(-\Delta)^{m-1}(v-u)=0$. Taking the Fourier transform of the fractional-order operator $(-\Delta)^{1/2}$ in the last equation, we find that the Fourier transform of $(-\Delta)^{m-1}(v-u)$ vanishes in $\mathbb R^n\setminus\{o\}$, thereby getting $(-\Delta)^{m-1}(v-u)=0$ via the inverse Fourier transform. Since $u-v$ is radially symmetric, we are
required to seek the radially symmetric solutions to
$(-\Delta)^{m-1}w=0$. Under the spherical coordinate system the last
equation becomes a linear ordinary differential equation (in the
radius $r=|x|$) of order $2(m-1)$. It is plain to check that
$2(m-1)$ functions $1,\log r, r^{\pm 2},...,r^{\pm 2(m-2)}$ satisfy
the equation but also are linearly independent. Thus there are
$2(m-1)$ constants $c_0, c_1, c_{\pm 2},...,c_{\pm 2(m-2)}$ such that
$$
v-u=c_0+c_1\log r+\sum_{k=1}^{m-2}(c_{2k} r^{2k}+c_{-2k}r^{-2k}).
$$
Thanks to the smoothness of $u$ and the first limit established in
(ii), we find $\lim_{r\to 0}rd(v-u)(r)/dr=0$ and consequently
$c_1=0$ as well as $c_{-2k}=0$ for $k=1,...,m-2$. On the other hand,
suppose $N$ is the largest integer amongst $\{1,...,m-2\}$ such that
$c_{2N}$ is nonzero. Then
$$
v-u=c_0+\sum_{k=1}^N c_{2k}r^{2k},
$$
and hence according to (iii) we have
$$
\limsup_{|x|\to\infty}\Big(|x|^2\Delta u(x)+(n/2-1)\big(|x||\nabla
u(x)|\big)^2\Big)=(n/2-1)c_{2N}^2\limsup_{r\to\infty}r^{4N}=\infty,
$$
But, nevertheless the hypothesis
$$
0\le\liminf_{|x|\to\infty}S_{g,n}(x)=-2(n-1)\limsup_{|x|\to\infty}e^{-2u}\Big(\Delta
u+(n/2-1)|\nabla u|^2\Big)
$$
amounts to
$$
\limsup_{|x|\to\infty}e^{-2u}\Big(\Delta u+(n/2-1)|\nabla
u|^2\Big)\le 0.
$$
With the above analysis, we reach a contradiction:
$$
\infty=\limsup_{|x|\to\infty}|x|^2\Big(\Delta u+(n/2-1)|\nabla
u|^2\Big)\le 0.
$$
Therefore $c_{2k}=0$ for all $k=1,...,m-2$. Consequently, $u=v-c_0$.

{\it Situation 2: $n=2m$ is an even integer.} Then
$$
(-\Delta)^{n/2}(v-u)=(-\Delta)^m(v-u)=0,
$$
and hence the previous argument for $n=2m-1$ can be employed to
deduce the result; see also \cite{Fa}.

(vi) Using (v) and the second limit in (ii) we obtain
$$
\lim_{r\to\infty}r\frac{du(r)}{dr}=\lim_{r\to\infty}r\frac{dv(r)}{dr}=-\frac{1}{2^{n-1}\Gamma(n/2)\pi^{n/2}}\int_{\mathbb
R^n}(-\Delta)^{n/2}u\, d\mathcal{H}^n,
$$
whence having
$$
\exp\big(u(r)\big)=\Big(\exp\big(u(1)\big)\Big)r^{-\frac{1}{2^{n-1}\Gamma(n/2)\pi^{n/2}}\int_{\mathbb
R^n}(-\Delta)^{n/2}u\, d\mathcal{H}^n +o(1)}\quad\hbox{as}\quad
r\to\infty.
$$
This last assertion, plus the hypothesis that $g=e^{2u}g_0$ is complete, ensures
$$
-\frac{1}{2^{n-1}\Gamma(n/2)\pi^{n/2}}\int_{\mathbb
R^n}(-\Delta)^{n/2}u\, d\mathcal{H}^n\ge -1,
$$
thereby implying (\ref{eq10}).

\end{proof}

\section{Proof of (\ref{eq10}) -- General Case}

In this section we prove (\ref{eq10}) through the radial symmetrization and
Proposition \ref{l1}. Although our argument ideas are similar
to ones explored in \cite{ChQiYa} and \cite{Fa}, for the paper's
completeness and the reader's convenience we feel that it is worth detailing the key steps of the proof.

\begin{proposition}\label{l2} Let $u\in C^\infty(\mathbb R^n)$ satisfy the hypotheses of Theorem \ref{t1}.
If
$$
\bar{u}(x)=\frac{1}{\mathcal{H}^{n-1}\big(\partial
B_{|x|}(o)\big)}\int_{\partial B_{|x|}(o)}u\,d\mathcal{H}^{n-1},
$$
then:

\item{\rm(i)} There is a constant $c$ such that
$$
u(x)=c+\frac{1}{2^{n-1}\Gamma(n/2)\pi^{n/2}}\int_{\mathbb
R^n}\Big(\log\frac{|y|}{|x-y|}\Big)(-\Delta)^{n/2}u(y)\,
d\mathcal{H}^n(y).
$$

\item{\rm(ii)} For any $p>0$ one has
$$
\lim_{|x|\to\infty}e^{-p\bar{u}(x)}\left(\frac{1}{\mathcal{H}^{n-1}\big(\partial
B_{|x|}(o)\big)}\int_{\partial B_{|x|}(o)}e^{pu}\,
d\mathcal{H}^{n-1}\right)=1.
$$

\item{\rm(iii)} $\bar{g}=e^{2\bar{u}}g_0$ is not only complete but also satisfies
$$
\liminf_{|x|\to\infty}S_{\bar{g},n}(x)\ge 0\quad\hbox{and}\quad\int_{\mathbb
R^n}|Q_{\bar{g},n}|dv_{\bar{g}}<\infty.
$$

\item{\rm(iv)} (\ref{eq10}) holds.
\end{proposition}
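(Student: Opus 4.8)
The plan is to deduce Proposition \ref{l2}, and with it (\ref{eq10}) in full generality, from the radially symmetric case of Proposition \ref{l1} by passing to the spherical average $\bar u$. For a function $h$ I write $\overline h(x)=\frac{1}{\mathcal{H}^{n-1}\big(\partial B_{|x|}(o)\big)}\int_{\partial B_{|x|}(o)}h\,d\mathcal{H}^{n-1}$; three facts will be used throughout: $\overline h\in C^\infty(\mathbb R^n)$ whenever $h$ is (including at $o$, since spherical means of monomials are polynomials in $|x|^2$); spherical averaging commutes with every rotation-invariant operator, so $(-\Delta)^{n/2}\bar u=\overline{(-\Delta)^{n/2}u}$ and $\Delta\bar u=\overline{\Delta u}$; and $\int_{\mathbb R^n}|\overline h|\,d\mathcal{H}^n\le\int_{\mathbb R^n}|h|\,d\mathcal{H}^n$ while $\int_{\mathbb R^n}\overline h\,d\mathcal{H}^n=\int_{\mathbb R^n}h\,d\mathcal{H}^n$. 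The four items are proved in the order (i)$\Rightarrow$(ii)$\Rightarrow$(iii)$\Rightarrow$(iv).

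For (i), set $f=(-\Delta)^{n/2}u$ and $\gamma_n=2^{n-1}\Gamma(n/2)\pi^{n/2}$. Since $\int_{\mathbb R^n}|f|\,d\mathcal{H}^n=\int_{\mathbb R^n}|Q_{g,n}|\,dv_g<\infty$ and $f$ is smooth, the integral defining $v$ converges absolutely (the logarithmic singularities at $y=o$ and $y=x$ are integrable, and $|\log(|y|/|x-y|)|\lesssim|x|/|y|$ for $|y|$ large), $v\in C^\infty(\mathbb R^n)$, and $(-\Delta)^{n/2}v=f$ by linearity together with Proposition \ref{l1}(iv) applied at each $y$; hence $w:=u-v$ is polyharmonic, $(-\Delta)^{n/2}w=0$ on $\mathbb R^n$. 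Following the scheme of \cite{ChQiYa,Fa} one then concludes $w$ is a constant: the potential $v$ grows at most logarithmically at infinity (essentially the content of the kernel estimates of Proposition \ref{l1}(i)), and the hypothesis $\liminf_{|x|\to\infty}S_{g,n}(x)\ge0$, i.e.\ $\limsup_{|x|\to\infty}e^{-2u}(\Delta u+\tfrac{n-2}{2}|\nabla u|^2)\le0$, controls the growth of $w$ enough that a Liouville-type theorem forces $w$ to be a polynomial; the dichotomy of Proposition \ref{l1}(v) then applies — a term of positive degree would make $\Delta u+\tfrac{n-2}{2}|\nabla u|^2$ positive and unbounded along generic rays, against the scalar-curvature bound, while completeness of $g$ excludes a $\log|x|$-type term. \emph{This is the main obstacle}: the growth analysis of the non-radial potential $v$ and the Liouville argument for polyharmonic functions are where the real work lies, the remaining parts being comparatively routine reductions to Proposition \ref{l1}.

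For (ii), by (i) we have $u-\bar u=v-\overline v$ on every sphere, so it suffices to show $\frac{1}{\mathcal{H}^{n-1}\big(\partial B_r(o)\big)}\int_{\partial B_r(o)}e^{p(v-\overline v)}\,d\mathcal{H}^{n-1}\to1$ as $r\to\infty$; Jensen's inequality gives ``$\ge 1$'', and for the reverse bound one estimates, for $|z|=|z'|=r$, the oscillation $v(z)-v(z')=\gamma_n^{-1}\int_{\mathbb R^n}\log\frac{|z'-y|}{|z-y|}f(y)\,d\mathcal{H}^n(y)$: the parts $\{|y|\le r/2\}$ and $\{|y|\ge 2r\}$ contribute $o(1)$ uniformly in $z,z'$, and the shell $\{r/2\le|y|\le 2r\}$ contributes at most $C\int_{\{|y|\ge r/2\}}|f|\,d\mathcal{H}^n\to0$ by the kernel bound of Proposition \ref{l1}(i), so the oscillation of $v$ — hence of $u$ — on $\partial B_r(o)$ tends to $0$. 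For (iii): first, $\int_{\mathbb R^n}|Q_{\bar g,n}|\,dv_{\bar g}=\int_{\mathbb R^n}|(-\Delta)^{n/2}\bar u|\,d\mathcal{H}^n=\int_{\mathbb R^n}|\overline{(-\Delta)^{n/2}u}|\,d\mathcal{H}^n\le\int_{\mathbb R^n}|(-\Delta)^{n/2}u|\,d\mathcal{H}^n<\infty$. Next, completeness of $g$ forces each radial ray $r\mapsto r\sigma$ to have infinite $g$-length, so $\int_1^\infty e^{u(r\sigma)}\,dr=\infty$ for every $\sigma\in\partial B_1(o)$; integrating over $\sigma$ and using Tonelli's theorem, $\int_1^\infty\big(\frac{1}{\mathcal{H}^{n-1}(\partial B_r(o))}\int_{\partial B_r(o)}e^{u}\,d\mathcal{H}^{n-1}\big)\,dr=\infty$, and since by (ii) with $p=1$ the integrand is asymptotic to $e^{\bar u(r)}$, also $\int_1^\infty e^{\bar u(r)}\,dr=\infty$, which is exactly completeness of the rotationally symmetric metric $\bar g$. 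Finally, rewriting the definition of the scalar curvature as $\Delta u=-\tfrac{1}{2(n-1)}e^{2u}S_{g,n}-\tfrac{n-2}{2}|\nabla u|^2$, averaging over $\partial B_{|x|}(o)$, and using $n\ge3$ together with $\overline{|\nabla u|^2}\ge|\nabla\bar u|^2$ (Jensen, applied to the radial derivative of $u$), one gets
$$
\Delta\bar u+\tfrac{n-2}{2}|\nabla\bar u|^2\le-\tfrac{1}{2(n-1)}\overline{e^{2u}S_{g,n}},\qquad\text{hence}\qquad S_{\bar g,n}\ge e^{-2\bar u}\,\overline{e^{2u}S_{g,n}};
$$
as $S_{g,n}\ge-\varepsilon$ near infinity gives $\overline{e^{2u}S_{g,n}}\ge-\varepsilon\,\overline{e^{2u}}$ there and $e^{-2\bar u}\,\overline{e^{2u}}\to1$ by (ii) with $p=2$, this yields $\liminf_{|x|\to\infty}S_{\bar g,n}(x)\ge-\varepsilon$ for every $\varepsilon>0$, i.e.\ $\ge0$.

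For (iv), by (iii) the function $\bar u$ is radially symmetric, lies in $C^\infty(\mathbb R^n)$, and satisfies the hypotheses of Theorem \ref{t1} with $\bar g$ in place of $g$; hence Proposition \ref{l1}(vi) gives $\int_{\mathbb R^n}(-\Delta)^{n/2}\bar u\,d\mathcal{H}^n\le\gamma_n$, and since $\int_{\mathbb R^n}(-\Delta)^{n/2}\bar u\,d\mathcal{H}^n=\int_{\mathbb R^n}\overline{(-\Delta)^{n/2}u}\,d\mathcal{H}^n=\int_{\mathbb R^n}(-\Delta)^{n/2}u\,d\mathcal{H}^n$, this is precisely (\ref{eq10}).
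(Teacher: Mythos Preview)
Your parts (iii) and (iv) are correct and essentially match the paper. Parts (i) and (ii), however, have genuine gaps.

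For (i), you rightly flag this as the main obstacle, but the sketch does not work as written. A Liouville theorem for $(-\Delta)^{n/2}w=0$ requires $w$ to be tempered a priori; the scalar-curvature hypothesis is only a one-sided bound on $\Delta u+\tfrac{n-2}{2}|\nabla u|^2$ and gives no direct control on the size of $u$ or of $w=u-v$. Moreover, Proposition~\ref{l1}(v) is stated and proved only for radially symmetric functions, so ``the dichotomy of Proposition~\ref{l1}(v) then applies'' is not a valid move for the non-radial $w$. The paper takes a different route: for every base point $x_0$ it forms the radial average of $u$ and $v$ about $x_0$, checks (via the argument for (iii)) that the radialized metric satisfies the hypotheses, and applies Proposition~\ref{l1}(v) to conclude that the radial average of $u-v$ about $x_0$ is constant. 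This forces $\Delta(u-v)(x_0)=0$ for every $x_0$, so $u-v$ is \emph{harmonic}, not merely polyharmonic. The harmonic function is then shown to be constant by a mean-value gradient estimate in which the spherical average of $|\nabla u|^2$ is bounded through the scalar-curvature hypothesis together with the explicit integral formula for $\Delta v$.

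For (ii), the oscillation argument is not justified. The kernel $\log\frac{|z'-y|}{|z-y|}$ is \emph{unbounded} for $y$ in the shell $\{r/2\le |y|\le 2r\}$: it blows up as $y\to z$ or $y\to z'$. Proposition~\ref{l1}(i) bounds the \emph{spherical average} of the kernel $\big||z|^2-|y|^2\big|/|z-y|^2$ (which governs $r\,dv/dr$), not the pointwise size of the logarithmic kernel; it does not yield the claimed estimate ``shell contributes at most $C\int_{\{|y|\ge r/2\}}|f|$'' uniformly in $z,z'\in\partial B_r(o)$. Indeed, for $f\in L^1$ with suitably concentrated bumps along a sequence $|y_k|\to\infty$ one can make $|v(y_k)-v(-y_k)|$ large, so the pointwise oscillation of $v$ on spheres need not tend to zero from the $L^1$ bound alone. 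The paper does not attempt to prove such an oscillation statement. Instead it writes $u=u_1+u_2$ with $u_1$ the part of the log-potential coming from $B_{|x|/2}(o)$, shows $u_1$ is radial up to $o(1)$, and controls $\int_{\partial B_r(o)}e^{pu_2}$ by a distribution-function/layer-cake estimate of the form $\mathcal{H}^{n-1}\big(\{\sigma\in\partial B_1(o):|u_2(r\sigma)|>t\}\big)\lesssim e^{-t/o(1)}$ as $r\to\infty$; this is what makes (ii) go through without any pointwise oscillation bound.
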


\begin{proof} (i) Continuing the use of $v$ defined in Proposition \ref{l1}, we get from Proposition \ref{l1} (iv) that
$(-\Delta)^{n/2}(u-v)=0$. To reach the desired result, we fix a
point $x_0\in\mathbb R^n$ and consider the radially symmetric versions
of $u$, $v$ and $u-v$ about $x_0$ as follows:

\[
\left\{\begin{array} {r@{\;\quad}l}

\bar{u}(x;x_0)=\frac{\int_{\partial B_{|x-x_0|}(x_0)}u\,d\mathcal{H}^{n-1}}{\mathcal{H}^{n-1}\big(\partial
B_{|x-x_0|}(x_0)\big)}, &\\
\bar{v}(x;x_0)=\frac{\int_{\partial
B_{|x-x_0|}(x_0)}v\,d\mathcal{H}^{n-1}}{\mathcal{H}^{n-1}\big(\partial
B_{|x-x_0|}(x_0)\big)}, &\\
\overline{u-v}(x;x_0)=\frac{\int_{\partial
B_{|x-x_0|}(x_0)}(u-v)\,d\mathcal{H}^{n-1}}{\mathcal{H}^{n-1}\big(\partial
B_{|x-x_0|}(x_0)\big)}.
\end{array}
\right.
\]
Owing to
\begin{eqnarray*}
v(x)\ =&\frac{1}{2^{n-1}\Gamma(n/2)\pi^{n/2}}\int_{\mathbb
R^n}\Big(\log\frac{|y-x_0|}{|x-y|}\Big)(-\Delta)^{n/2}u(y)\,
d\mathcal{H}^n(y)\\
\ +&\frac{1}{2^{n-1}\Gamma(n/2)\pi^{n/2}}\int_{\mathbb
R^n}\Big(\log\frac{|y|}{|y-x_0|}\Big)(-\Delta)^{n/2}u(y)\,
d\mathcal{H}^n(y),
\end{eqnarray*}
we see from the proof of the forthcoming (iii) that the
conformal metric $g_{x_0}=e^{2\bar{u}(\cdot;x_0)}g_0$ ensures
$$
\liminf_{|x|\to\infty}S_{g_{x_0},n}(x)\ge 0\quad\hbox{and}\quad\int_{\mathbb
R^n}|Q_{g_{x_0},n}|dv_{g_{x_0}}<\infty.
$$
So, by Proposition \ref{l1} (iv) and (v) we find that
$\overline{u-v}(x;x_0)$ equals a constant -- this especially derives
$\Delta (u-v)(x_0)=\Delta (\overline{u-v})(x_0;x_0)=0$. Since $x_0$
is arbitrarily chosen, one has $\Delta (u-v)=0$, i.e., $u-v$ is a
harmonic function on $\mathbb R^n$ and consequently, $\partial
(u-v)/\partial x_k$ (for each $k=1,...,n$) is harmonic. A combined
application of the mean-value-theorem, Cauchy-Schwarz's inequality
and the representation of $v$ yields
\begin{eqnarray*}
&&\Big|\frac{\partial(u-v)}{\partial x_k}(x_0)\Big|^2\\
&&=\left| \frac{1}{\mathcal{H}^{n-1}\big(\partial
B_{r}(x_0)\big)}\int_{\partial
B_{r}(x_0)}\frac{\partial(u-v)}{\partial
x_k}\,d\mathcal{H}^{n-1}\right|^2\\
&&\le\left|\frac{1}{\mathcal{H}^{n-1}\big(\partial
B_{r}(x_0)\big)}\int_{\partial B_{r}(x_0)}|\nabla(u-v)|\,d\mathcal{H}^{n-1}\right|^2\\
&&\lesssim\frac{1}{\mathcal{H}^{n-1}\big(\partial
B_{r}(x_0)\big)}\int_{\partial B_{r}(x_0)}(|\nabla u|^2+|\nabla
v|^2)\,d\mathcal{H}^{n-1}.
\end{eqnarray*}

Now, the representation of $v$, the Cauchy-Schwarz inequality, Fubini's theorem and the proof of Proposition \ref{l1} (iii) produce
\begin{eqnarray*}
&&\limsup_{r\to\infty}r^2\left(\frac{1}{\mathcal{H}^{n-1}\big(\partial
B_{r}(x_0)\big)}\int_{\partial B_{r}(x_0)}|\nabla v|^2\,d\mathcal{H}^{n-1}\right)\\
&&\lesssim\limsup_{r\to\infty}r^2\left(\frac{\int_{\partial B_r(x_0)}\Big(\int_{\mathbb R^n}\frac{|(-\Delta)^{n/2}u(y)|}{|z-y|^2}\,d\mathcal{H}^n(y)\Big)\,d\mathcal{H}^{n-1}(z)}{\mathcal{H}^{n-1}\big(\partial
B_{r}(x_0)\big)\Big(\int_{\mathbb
R^n}|(-\Delta)^{n/2}u|\,d\mathcal{H}^n\Big)^{-1}}\right)\\
&&<\infty.
\end{eqnarray*}
In the meantime, the formula of $S_{g,n}$, $\liminf_{|z|\to\infty} S_{g,n}(z)\ge 0$ and $\Delta u=\Delta v$ ensure that if $r\to\infty$ then
\begin{eqnarray*}
&&\frac{1}{\mathcal{H}^{n-1}\big(\partial
B_{r}(x_0)\big)}\int_{\partial B_{r}(x_0)}|\nabla u|^2\,d\mathcal{H}^{n-1}\\
&&=\Big(\frac{2}{2-n}\Big)\frac{1}{\mathcal{H}^{n-1}\big(\partial
B_{r}(x_0)\big)}\int_{\partial B_{r}(x_0)}\Big(\Delta u+\frac{e^{2u}S_{g,n}}{2(n-1)}\Big)\,d\mathcal{H}^{n-1}\\
&&\le\Big(\frac{2}{2-n}\Big)\frac{1}{\mathcal{H}^{n-1}\big(\partial
B_{r}(x_0)\big)}\int_{\partial B_{r}(x_0)}\Delta v\,d\mathcal{H}^{n-1}\\
&&=-\frac{(2^{n-2}\Gamma(n/2)\pi^{n/2})^{-1}}{\mathcal{H}^{n-1}\big(\partial
B_{r}(x_0)\big)}\int_{\partial B_{r}(x_0)}\left(\int_{\mathbb
R^n}\frac{(-\Delta)^{n/2}u(y)}{|z-y|^2}\,d\mathcal{H}^n(y)\right)\,d\mathcal{H}^{n-1}(z),
\end{eqnarray*}
and hence by Fubini's theorem and the proof of Proposition \ref{l1} (iii),
$$
\limsup_{r\to\infty}r^2\left(\frac{1}{\mathcal{H}^{n-1}\big(\partial
B_{r}(x_0)\big)}\int_{\partial B_{r}(x_0)}|\nabla
u|^2\,d\mathcal{H}^{n-1}\right)<\infty.
$$
Therefore,
$\limsup_{r\to\infty}r^2\Big|\frac{\partial(u-v)}{\partial
x_k}(x_0)\Big|<\infty$. This forces that $u-v$ is a constant.

(ii) The argument comes from a non-essential adaption of the proof
of \cite[Lemma 3.2]{ChQiYa}. According to the just-established (i),
we write $u=u_1+u_2$ where
$$
u_1(x)=c+\frac{1}{2^{n-1}\Gamma(n/2)\pi^{n/2}}\int_{B_{|x|/2}(o)}\Big(\log\frac{|y|}{|x-y|}\Big)(-\Delta)^{n/2}u(y)\,
d\mathcal{H}^n(y)
$$
and
$$
u_2(x)=\frac{1}{2^{n-1}\Gamma(n/2)\pi^{n/2}}\int_{\mathbb
R^n\setminus
B_{|x|/2}(o)}\Big(\log\frac{|y|}{|x-y|}\Big)(-\Delta)^{n/2}u(y)\,
d\mathcal{H}^n(y).
$$
If $u_1$ is further split into two pieces
$$
u_{11}(x)=\frac{1}{2^{n-1}\Gamma(n/2)\pi^{n/2}}\int_{B_{|x|/2}(o)}\Big(\log\frac{|y|}{|x|}\Big)(-\Delta)^{n/2}u(y)\,
d\mathcal{H}^n(y)
$$
and
$$
u_{12}(x)=\frac{1}{2^{n-1}\Gamma(n/2)\pi^{n/2}}\int_{B_{|x|/2}(o)}\Big(\log\frac{|x|}{|x-y|}\Big)(-\Delta)^{n/2}u(y)\,
d\mathcal{H}^n(y),
$$
then $u_1=u_{11}+u_{12}$, $u_{11}(x)=u_{11}(|x|)$, and
$\lim_{|x|\to\infty}u_{12}(x)=0$ -- this is because
$$
u_{12}(x)\lesssim\log(1-\epsilon)^{-1}+\int_{B_{|x|/2}(o)\setminus
B_{\epsilon|x|}(o)}\big|(-\Delta)^{n/2}u\big|\, d\mathcal{H}^n\to 0
$$
when $\epsilon\to 0$ is taken so that $\epsilon|x|\to\infty$ as
$|x|\to\infty$. As a result, we find
\begin{eqnarray*}
&&\frac{p}{\mathcal{H}^{n-1}\big(\partial
B_{|x|}(o)\big)}\int_{\partial
B_{|x|}(o)}(u-u_2)\, d\mathcal{H}^{n-1}\\
&&=\log\Big(\frac{1}{\mathcal{H}^{n-1}\big(\partial
B_{|x|}(o)\big)}\int_{\partial B_{|x|}(o)}\exp\big(p(u-u_2)\big)\,
d\mathcal{H}^{n-1}\Big)+o(1).
\end{eqnarray*}
On the one hand, we can make the following estimates for any $r\in
(0,\infty)$ and suitably small $\theta\in (0,1/2)$:
\begin{eqnarray*}
&&\left|\frac{1}{\mathcal{H}^{n-1}\big(\partial
B_{r}(o)\big)}\int_{\partial
B_{r}(o)}u_2\, d\mathcal{H}^{n-1}\right|\\
&&\lesssim\left|\int_{\mathbb R^n\setminus
B_{r/2}(o)}\left(r^{1-n}\int_{\partial
B_{r}(o)}\log\frac{|y|}{|x-y|}\,
d\mathcal{H}^{n-1}(x)\right)(-\Delta)^{n/2}u(y)\,
d\mathcal{H}^n(y)\right|\\
&&\lesssim\int_{\mathbb R^n\setminus
B_{r/2}(o)}\left(r^{1-n}\int_{\partial
B_{r}(o)}\left|\log\frac{|y|}{|x-y|}\right|\,
d\mathcal{H}^{n-1}(x)\right)\big|(-\Delta)^{n/2}u(y)\big|\, d\mathcal{H}^n(y)\\
&&\lesssim \int_{\mathbb R^n\setminus
B_{r/2}(o)}r^{1-n}\left(\int_{\partial
B_{r}(o)\setminus\{x\in{\mathbb R}^n:\
|x-y|\le\theta|y|\}}+\int_{\partial B_{r}(o)\cap\{x\in{\mathbb
R}^n:\
|x-y|\le\theta|y|\}}\right)\\
&&\quad\left|\log\frac{|y|}{|x-y|}\right|\,
d\mathcal{H}^{n-1}\big|(-\Delta)^{n/2}u(y)\big|\, d\mathcal{H}^n(y)\\
&&\lesssim \int_{\mathbb R^n\setminus
B_{r/2}(o)}\big|(-\Delta)^{n/2}u\big|\, d\mathcal{H}^n\to
0\quad\hbox{as}\quad r\to\infty.
\end{eqnarray*}
On the other hand, we have that if $|r\sigma-y|\le|y|/3$, $|y|\ge
r/2$ and $\sigma\in\partial B_1(o)$ then
$$
\left|\log\frac{|y|}{|r\sigma-y|}\right|\le\log\frac{3}{2}+\left|\log\Big|\sigma-\frac{y}{r}
\Big|\right|,
$$
and consequently, if $E_t=\{\sigma\in\partial B_1(o):\
|u_2(r\sigma)|>t\}$ for $t>0$ then

\begin{eqnarray*}
&&t\mathcal{H}^{n-1}(E_t)\\
&&\le\int_{E_t}|u_2|\,d\mathcal{H}^{n-1}\\
&&\lesssim\int_{\mathbb R^n\setminus
B_{r/2}(o)}\left(\int_{E_t}\left|\log\frac{|y|}{|r\sigma-y|}\right|\,d\mathcal{H}^{n-1}(\sigma)\right)\big|(-\Delta)^{n/2}u(y)\big|\, d\mathcal{H}^n(y)\\
&&\lesssim\int_{\mathbb R^n\setminus
B_{r/2}(o)}\left(\int_{E_t\setminus\{\sigma\in\partial B_1(o):\ |r\sigma-y|\le|y|/3\}}+\int_{E_t\cap\{\sigma\in\partial B_1(o):\ |r\sigma-y|\le|y|/3\}}\right)\\
&&\quad\left|\log\frac{|y|}{|r\sigma-y|}\right|\,d\mathcal{H}^{n-1}(\sigma)\big|(-\Delta)^{n/2}u(y)\big|\, d\mathcal{H}^n(y)\\
&&\lesssim\mathcal{H}^{n-1}(E_t)\left(\int_{\mathbb R^n\setminus
B_{r/2}(o)}|(-\Delta)^{n/2}u|\,
d\mathcal{H}^n\right)\Big(1-\log\big(\mathcal{H}^{n-1}(E_t)\big)\Big),
\end{eqnarray*}
and hence
$$
\mathcal{H}^{n-1}(E_t)\lesssim\exp\Big(-\frac{t}{o(1)}\Big)\quad\hbox{as}\quad
r\to\infty.
$$
Note that this last $o(1)$ is positive. So, the layer-cake
representation theorem yields
\begin{eqnarray*}
&&\left|\frac{1}{\mathcal{H}^{n-1}\big(\partial
B_r(o)\big)}\int_{\partial
B_r(o)}\Big(\exp\big(pu_2(x)\big)-1\Big)\,d\mathcal{H}^{n-1}\right|\\
&&=\frac{p}{\mathcal{H}^{n-1}\big(\partial B_r(o)\big)}\int_0^\infty
\mathcal{H}^{n-1}(E_t)\exp(pt)dt=o(1)\quad\hbox{as}\quad r\to\infty.
\end{eqnarray*}
The previously-established equalities and inequalities indicate that
\begin{eqnarray*}
&&\frac{p}{\mathcal{H}^{n-1}\big(\partial B_r(o)\big)}\int_{\partial
B_r(o)}u\,d\mathcal{H}^{n-1}\\
&&=\frac{p}{\mathcal{H}^{n-1}\big(\partial
B_r(o)\big)}\left(\int_{\partial B_r(o)}(u-u_2)\,
d\mathcal{H}^{n-1}\right)+o(1)\\
&&=\log\left(\frac{1}{\mathcal{H}^{n-1}\big(\partial
B_{|x|}(o)\big)}\int_{\partial
B_{|x|}(o)}\exp(pu)\big(\exp(-pu_2)\big)\big)\,
d\mathcal{H}^{n-1}\right)+o(1)\\
&&=\log\left(\frac{1}{\mathcal{H}^{n-1}\big(\partial
B_{|x|}(o)\big)}\int_{\partial B_{|x|}(o)}\exp(pu)\,
d\mathcal{H}^{n-1}\right)+o(1)
\end{eqnarray*}
holds whenever $r\to\infty$, as desired.

(iii) It is clear that $S_{g,n}\ge 0$ is equivalent to $\Delta
u+(n/2-1)|\nabla u|^2\le 0$. Since
$$
\Delta\bar{u}=\frac{1}{\mathcal{H}^{n-1}\big(\partial
B_{|x|}(o)\big)}\int_{\partial B_{|x|}(o)}\Delta
u\,d\mathcal{H}^{n-1}
$$
and (thanks to Cauchy-Schwarz's inequality)
\begin{eqnarray*}
&&|\nabla\bar{u}|^2=\left(\frac{1}{\mathcal{H}^{n-1}\big(\partial
B_{1}(o)\big)}\int_{\partial B_{1}(o)}\frac{d u}{dr}\,d\mathcal{H}^{n-1}\right)^2\\
&&\le\frac{1}{\mathcal{H}^{n-1}\big(\partial
B_{|x|}(o)\big)}\int_{\partial B_{|x|}(o)}|\nabla
u|^2\,d\mathcal{H}^{n-1},
\end{eqnarray*}
one gets that $S_{g,n}\ge 0$ implies
$\Delta\bar{u}+(n/2-1)|\nabla\bar{u}|^2\le 0$ which in turn gives
$S_{\bar{g},n}\ge 0$.

Next, the fact that $Q_{\bar{g},n}$ is absolutely integrable with respect to $dv_{\bar{g}}$ follows from the following estimate (via Fubini's theorem):
\begin{eqnarray*}
&&\int_{\mathbb R^n}|Q_{\bar{g},n}|dv_g=\int_{\mathbb R^n}|(-\Delta)^{n/2}\bar{u}|\,d\mathcal{H}^n\\
&&=\int_{\mathbb
R^n}\left|(-\Delta)^{n/2}\left(\frac{1}{\mathcal{H}^{n-1}\big(\partial
B_{|x|}(o)\big)}\int_{\partial B_{|x|}(o)}u\,d\mathcal{H}^{n-1}\right)\right|\,d\mathcal{H}^n\\
&&=\int_{\mathbb R^n}\left|\frac{1}{\mathcal{H}^{n-1}\big(\partial
B_{|x|}(o)\big)}\int_{\partial B_{|x|}(o)}(-\Delta)^{n/2} u\,d\mathcal{H}^{n-1}\right|\,d\mathcal{H}^n\\
&&\le\frac{1}{\mathcal{H}^{n-1}\big(\partial
B_{1}(o)\big)}\int_{\partial B_{1}(o)}\left(\int_{\mathbb R^n}|(-\Delta)^{n/2} u|\,d\mathcal{H}^n\right)\,d\mathcal{H}^{n-1}\\
&&=\int_{\mathbb R^n}|(-\Delta)^{n/2} u|\,d\mathcal{H}^n=\int_{\mathbb R^n}|Q_{g,n}|dv_g<\infty.
\end{eqnarray*}

Note that (i) and (ii) in Proposition \ref{l2}, together with the completeness of $g=e^{2u}g_0$, yield that $\int_0^\infty e^{u(r\sigma)}dr$ diverges for any given $\sigma\in\partial B_1(o)$ and so that
$$
\int_0^\infty e^{\bar{u}}dr=\frac{1}{\mathcal{H}^{n-1}\big(\partial B_1(o)\big)}\int_{\partial B_{1}(o)}\left(\int_0^\infty e^{u(r\sigma)}dr\right)\,d\mathcal{H}^{n-1}(\sigma)
$$
diverges. Therefore $\bar{g}=e^{2\bar{u}}g_0$ is complete.

(iv) Making a simple calculation with the spherical coordinate system and applying Proposition \ref{l1} (vi) to the conformal metric $\bar{g}$, we immediately obtain
\begin{eqnarray*}
&&\int_{\mathbb R^n}Q_{{g},n}\,dv_{g}=\int_{\mathbb R^n}(-\Delta)^{n/2}{u}\,d\mathcal{H}^{n}=\int_{\mathbb R^n}(-\Delta)^{n/2}\bar{u}\,d\mathcal{H}^{n}\\
&&=\int_{\mathbb R^n}Q_{\bar{g},n}\,dv_{\bar{g}}=\int_{\mathbb R^n}(-\Delta)^{n/2}\bar{u}\,d\mathcal{H}^n\le 2^{n-1}\Gamma(n/2)\pi^{n/2},
\end{eqnarray*}
whence completing the argument.

\end{proof}

\section{Proof of (\ref{eq11a}) -- Special Case}

In this section we verify that (\ref{eq11a}) is true under the
radial symmetry.

\begin{proposition}\label{eq:radial} Let $u\in C^\infty(\mathbb R^n)$ be radially symmetric and satisfy
the hypotheses of Theorem \ref{t1}. If $w(s)=s+u(e^s)$ and

\[
\left\{\begin{array} {r@{\;\quad}l}
V_n(t)=\int_{B_{e^t}(o)}e^{nu}\,d\mathcal{H}^n=n\omega_n\int_{-\infty}^te^{nw(s)}\, ds, &\\
V_{n-1}(t)=\frac1n\int_{\partial B_{e^t}(o)}e^{(n-1)u}\,d\mathcal{H}^{n-1}=\omega_ne^{(n-1)w(t)},&\\
V_{n-2}(t)=\frac{1}{n(n-1)}\int_{\partial
B_{e^t}(o)}H_1e^{(n-1)u}\,d\mathcal{H}^{n-1}=\big(\frac{\omega_n}{n-1}\big)\frac{H_1(e^t)}{e^{(1-n)w(t)}}, &\\
V_{n-3}(t)=\frac{2}{n(n-1)(n-2)}\int_{\partial
B_{e^t}(o)}H_2e^{(n-1)u}\,d\mathcal{H}^{n-1}=\big(\frac{2\omega_n}{(n-1)(n-2)}\big)\frac{H_2(e^t)}{e^{(1-n)w(t)}},
\end{array}
\right.
\]
where $H_k$ stands for the $k$-th symmetric form of the principle
curvature of the boundary of a convex domain in $\mathbb R^n$, then:

\item{\rm(i)}
$$
\lim_{t\to
\infty}\frac{\big(V_{n-3}(t)\big)^{\frac{n-2}{n-1}}}{\omega_n^{\frac{1}{n-1}}\big(V_{n-2}(t)\big)^{\frac{n-3}{n-1}}}
=1-\frac{1}{2^{n-1}\Gamma(n/2)\pi^{n/2}}\int_{\mathbb
R^n}(-\Delta)^{n/2}u\, d\mathcal{H}^n.
$$

\item{\rm(ii)} $\eqref{eq11a}$ holds.
\end{proposition}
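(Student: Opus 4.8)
The plan is to route everything through the single profile function $w(s)=s+u(e^s)$ and the scalar
$$
\alpha:=\frac{1}{2^{n-1}\Gamma(n/2)\pi^{n/2}}\int_{\mathbb R^n}(-\Delta)^{n/2}u\,d\mathcal{H}^n,
$$
so that the right-hand side of (i) and the left-hand side of (\ref{eq11a}) both read literally $1-\alpha$; the task then reduces to computing two geometric limits and checking each equals $1-\alpha$. By Proposition \ref{l1}(v) we may write $u=v+c$, so Proposition \ref{l1}(ii) yields $\lim_{r\to\infty}r\,u'(r)=\lim_{r\to\infty}r\,v'(r)=-\alpha$, whence $\lim_{t\to\infty}w'(t)=1+\lim_{r\to\infty}r\,u'(r)=1-\alpha$; moreover $\alpha\le1$ by Proposition \ref{l1}(vi), so this limit is nonnegative.

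First I would record $V_{n-2}$ and $V_{n-3}$ explicitly in terms of $w$. By radial symmetry each coordinate sphere $\partial B_{e^t}(o)$ is umbilic in $(\mathbb R^n,g)$, with common principal curvature $\kappa(t)$. Passing to geodesic polar coordinates about $o$ -- in which the radial arclength element is $e^{w(t)}\,dt$ and the $g$-area of $\partial B_{e^t}(o)$ equals $n\omega_n e^{(n-1)w(t)}$ -- the first variation of area reads
$$
e^{-w(t)}\frac{d}{dt}\bigl(n\omega_n e^{(n-1)w(t)}\bigr)=H_1(e^t)\,n\omega_n e^{(n-1)w(t)},
$$
so $H_1(e^t)=(n-1)w'(t)e^{-w(t)}$, hence $\kappa(t)=w'(t)e^{-w(t)}$ and $H_2(e^t)=\binom{n-1}{2}\kappa(t)^2=\tfrac{(n-1)(n-2)}{2}w'(t)^2e^{-2w(t)}$. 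Substituting into the definitions in the statement, the exponential factors collapse to
$$
V_{n-1}(t)=\omega_n e^{(n-1)w(t)},\qquad V_{n-2}(t)=\omega_n w'(t)e^{(n-2)w(t)},\qquad V_{n-3}(t)=\omega_n w'(t)^2e^{(n-3)w(t)}.
$$
Now (i) is a bookkeeping of exponents: in its left-hand quotient the powers of $\omega_n$ sum to $\tfrac{n-2}{n-1}-\tfrac1{n-1}-\tfrac{n-3}{n-1}=0$, those of $e^{w(t)}$ sum to $\tfrac{(n-3)(n-2)}{n-1}-\tfrac{(n-2)(n-3)}{n-1}=0$, and those of $w'(t)$ sum to $\tfrac{2(n-2)}{n-1}-\tfrac{n-3}{n-1}=1$, so the quotient equals $w'(t)$ identically; letting $t\to\infty$ and using the first paragraph gives (i). (Since $1-\alpha\ge0$, the fractional powers of $V_{n-2},V_{n-3}$ are legitimate for all large $t$.)

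For (ii) I would substitute $r=e^t$ into the right-hand side of (\ref{eq11a}), using $s_g(\partial B_r(o))=n\,V_{n-1}(t)=n\omega_n e^{(n-1)w(t)}$ and $v_g(B_r(o))=V_n(t)=n\omega_n\int_{-\infty}^t e^{nw(s)}\,ds$; after simplifying the constants the right-hand side becomes $\tfrac1n\lim_{t\to\infty}\bigl(e^{nw(t)}\big/\int_{-\infty}^t e^{nw(s)}\,ds\bigr)$, so it remains to show this inner limit is $n(1-\alpha)$. Writing $w(s)=w(t)-\int_s^t w'(\tau)\,d\tau$ and using $w'(\tau)\to1-\alpha$, for each $\varepsilon>0$ there is a $T$ such that
$$
e^{nw(t)}\int_0^{t-T}e^{-n(1-\alpha+\varepsilon)\rho}\,d\rho\ \le\ \int_T^t e^{nw(s)}\,ds\ \le\ e^{nw(t)}\int_0^{t-T}e^{-n(1-\alpha-\varepsilon)\rho}\,d\rho
$$
for all $t\ge T$, while $\int_{-\infty}^T e^{nw(s)}\,ds$ is a finite constant. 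Dividing by $e^{nw(t)}$, letting $t\to\infty$ (note $w(t)\to+\infty$ when $\alpha<1$) and then $\varepsilon\to0$, the ratio $\int_{-\infty}^t e^{nw(s)}\,ds\big/e^{nw(t)}$ tends to $\tfrac1{n(1-\alpha)}$ if $\alpha<1$ and to $\infty$ if $\alpha=1$, so in either case $e^{nw(t)}\big/\int_{-\infty}^t e^{nw(s)}\,ds\to n(1-\alpha)$ and the right-hand side of (\ref{eq11a}) equals $1-\alpha$. Since $\int_{\mathbb R^n}Q_{g,n}\,dv_g=\int_{\mathbb R^n}(-\Delta)^{n/2}u\,d\mathcal{H}^n$, its left-hand side is $1-\alpha$ too, and (\ref{eq11a}) follows.

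The delicate points, as I see them, are Step~1 -- pinning down $H_1$ and $H_2$ with the right normalisation of the elementary symmetric functions and verifying that the exponents genuinely collapse -- together with the borderline case $\alpha=1$ of (ii), where one must check that $\int_{-\infty}^t e^{nw(s)}\,ds$ really overwhelms $e^{nw(t)}$; the comparison estimate above settles both once $w'(\tau)\to1-\alpha$ is in hand.
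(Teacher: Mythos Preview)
Your proof is correct and, for part (i), proceeds exactly as the paper does: both routes compute $H_1(e^t)=(n-1)w'(t)e^{-w(t)}$ and $H_2(e^t)=\tfrac{(n-1)(n-2)}{2}w'(t)^2e^{-2w(t)}$ (you via first variation of area, the paper via the conformal change rule for $H_1$ and the identity $2H_2=H_1^2-\operatorname{tr}L^2$), substitute to get $V_{n-2}=\omega_n w' e^{(n-2)w}$, $V_{n-3}=\omega_n (w')^2 e^{(n-3)w}$, and observe that the quotient in (i) collapses to $w'(t)\to 1-\alpha$.

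For part (ii) the strategies diverge slightly. The paper rewrites the isoperimetric ratio as $n^{-1}e^{nw(t)}\big/\int_{-\infty}^t e^{nw(s)}\,ds$ just as you do, but then applies L'H\^opital's rule in Case~1 ($1-\alpha>0$) and handles Case~2 ($1-\alpha=0$) by a further split according to whether $V_n(t)$ and $V_{n-1}(t)$ stay bounded or diverge. Your comparison estimate --- sandwiching $\int_T^t e^{nw(s)}\,ds$ between $e^{nw(t)}\int_0^{t-T}e^{-n(1-\alpha\pm\varepsilon)\rho}\,d\rho$ once $|w'(\tau)-(1-\alpha)|<\varepsilon$ --- is a clean alternative that treats both cases at once and avoids any discussion of whether $V_n$ or $V_{n-1}$ is finite. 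The trade-off is that L'H\^opital is a one-line invocation when it applies, whereas your sandwich argument is self-contained and makes the borderline $\alpha=1$ case transparent (the lower bound alone gives $\liminf\ge 1/(n\varepsilon)$ for every $\varepsilon>0$). Either way the conclusion is the same.
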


\begin{proof} (i) From (ii) and (v) of Proposition\;$\ref{l1}$\; we
read off
$$\lim_{r\to
\infty}r\frac{dv(r)}{dr}=-\frac{1}{2^{n-1}\Gamma(n/2)\pi^{n/2}}\int_{\mathbb
R^n}(-\Delta)^{n/2}u\, d\mathcal{H}^n=\lim_{r\to
\infty}r\frac{du(r)}{dr},
$$
where \;$v$\;is given as in Proposition \ref{l1}.

We now consider cylindrical coordinates $|x|=r=e^t$ and then use
$w(t)=u(e^t)+t$ to get
$$
\frac{d w}{dt}=r\frac{du}{dr}+1\quad\hbox{and}\quad \lim_{t\to
\infty}\frac{d w}{dt}=\lim_{r\to \infty}r\frac{du}{dr}+1,
$$
whence obtaining
$$
\lim_{t\to \infty}\frac{d
w}{dt}=1-\frac{1}{2^{n-1}\Gamma(n/2)\pi^{n/2}}\int_{\mathbb
R^n}(-\Delta)^{n/2}u\, d\mathcal{H}^n.
$$
On the other hand, from the rule of transformation of $H_1$ under
conformal changes and the formula
$n\omega_n=\mathcal{H}^{n-1}\big(\partial B_1(o)\big)$ we see
$$
H_1(r):=H_1[\partial B_r(o)]=
(n-1)e^{-u(r)}\Big(\frac{1}{r}+\frac{du}{dr}\Big).
$$
This equality, plus change of variables and the relation
$$
2H_2(r):=2H_2[\partial B_r(o)]=H_1^2(r)-\hbox{tr}L^2(r),
$$
where $\hbox{tr}L^2(r)$ is the trace of the square of the second
fundamental form of $\partial B_r(o)$ whose $L(r)$ is the
$(n-1)\times(n-1)$ matrix $e^{-u}(r^{-1}+{d u}/{dr})\delta_{ij}$,
easily implies
$$
V_{n-2}(t)=\omega_n e^{(n-2)w(t)}\frac{dw}{dt}\quad\hbox{and}\quad
V_{n-3}(t)=\omega_ne^{(n-3)w(t)}\Big(\frac{dw}{dt}\Big)^2.
$$
Consequently, we find
$$
\frac{\big(V_{n-3}(t)\big)^{\frac{n-2}{n-1}}}{\omega_n^{\frac{1}{n-1}}\big(V_{n-2}(t)\big)^{\frac{n-3}{n-1}}}=\frac{dw}{dt}\\
$$
thereby establishing the required formula:
$$
\lim_{t\to
\infty}\frac{\big(V_{n-3}(t)\big)^{\frac{n-2}{n-1}}}{\omega_n^{\frac{1}{n-1}}\big(V_{n-2}(t)\big)^{\frac{n-3}{n-1}}}=1-\frac{1}{2^{n-1}\Gamma(n/2)\pi^{n/2}}\int_{\mathbb
R^n}(-\Delta)^{n/2}u\, d\mathcal{H}^n.
$$

(ii) Using the definitions of $V_n$ and $V_{n-1}$, we conclude
$$
\frac{\big(V_{n-1}(t)\big)^{\frac{n}{n-1}}}{\omega_n^{\frac{1}{n-1}}V_{n}(t)}=\frac{n^{-1}e^{nw(t)}}{\int_{-\infty}^te^{nw(s)}ds}.
$$
On the other hand, from Proposition\;$\ref{l1}$ (v) with connection
to $\lim_{t\to\infty}dw/dt$ we infer $\lim_{t\to \infty}{d
w}/{dt}\geq 0$. Next we handle two cases:

{\it Case 1: $\lim_{t\to \infty}{d w}/{dt}>0$.} Under this
condition, we clearly have
$$
\lim_{t\to \infty}e^{nw(t)}=\lim_{t\to
\infty}\int_{-\infty}^te^{nw(s)}ds=\infty,
$$
and thus use L'H\^{o}pital's rule to get
$$
\lim_{t\to
\infty}\frac{\big(V_{n-1}(t)\big)^{\frac{n}{n-1}}}{\omega_n^{\frac{1}{n-1}}V_{n}(t)}=\lim_{t\to
\infty}\frac{dw}{dt}.
$$

{\it Case 2: $\lim_{t\to \infty}{d w}/{dt}=0$.} When
\;$\lim_{t\to\infty}V_{n}(t)=\infty$\;, we may have either
$\lim_{t\to\infty}V_{n-1}(t)=\infty$ or
$\sup_{t>0}V_{n-1}(t)<\infty$. For the former we can once again use
L'H\^{o}pital's rule to deduce
$$
\lim_{t\to
\infty}\frac{\big(V_{n-1}(t)\big)^{\frac{n}{n-1}}}{\omega_n^{\frac{1}{n-1}}V_{n}(t)}=\lim_{t\to
\infty}\frac{dw}{dt}=0.
$$
For the latter, we trivially get
$$
\lim_{t\to
\infty}\frac{\big(V_{n-1}(t)\big)^{\frac{n}{n-1}}}{\omega_n^{\frac{1}{n-1}}V_{n}(t)}=0=\lim_{t\to
\infty}\frac{dw}{dt}.
$$
On the other hand, when $\sup_{t>0}V_{n}(t)<\infty$, we have
$\lim_{t\to\infty}e^{nw(t)}=0$ which in turn yields
$$
\lim_{t\to
\infty}\frac{\big(V_{n-1}(t)\big)^{\frac{n}{n-1}}}{\omega_n^{\frac{1}{n-1}}V_{n}(t)}=0=\lim_{t\to
\infty}\frac{dw}{dt}.
$$
All in all, we arrive at
$$
\lim_{t\to
\infty}\frac{\big(V_{n-1}(t)\big)^{\frac{n}{n-1}}}{\omega_n^{\frac{1}{n-1}}V_{n}(t)}=\lim_{t\to
\infty}\frac{dw}{dt}=1-\frac{1}{2^{n-1}\Gamma(n/2)\pi^{n/2}}\int_{\mathbb
R^n}(-\Delta)^{n/2}u\, d\mathcal{H}^n,
$$
as desired.
\end{proof}

\section{Proof of (\ref{eq11a}) -- General Case}

In this section we handle the validity of (\ref{eq11a}) without the
radially symmetric hypothesis.

\begin{proposition}\label{eq:radial2} Let $u\in C^\infty(\mathbb R^n)$ satisfy
the hypotheses of Theorem \ref{t1}. If
\[
\left\{\begin{array} {r@{\;\quad}l}
\mathsf{V}_{n}(r)=\int_{B_r(o)}e^{nu}\,d\mathcal{H}^n, &\\
\mathsf{V}_{n-1}(r)=\frac{1}{n}\int_{\partial B_r(o)}e^{(n-1)u}\,d\mathcal{H}^{n-1}, &\\
\mathsf{V}_{n-2}(r)=\frac{1}{n(n-1)}\int_{\partial
B_r(o)}H_1\frac{d\mathcal{H}^{n-1}}{e^{(1-n)u}}=\frac{1}{n}\int_{\partial
B_r(o)}\left(\frac{1}{r}+\frac{\partial u}{\partial
r}\right)\frac{d\mathcal{H}^{n-1}}{e^{(2-n)u}}, &\\
\mathsf{V}_{n-3}(r)=\frac{2}{n(n-1)(n-2)}\int_{\partial
B_r(o)}H_2\frac{d\mathcal{H}^{n-1}}{e^{(1-n)u}}=\frac{1}{n}\int_{\partial
B_r(o)}\left(\frac{1}{r}+\frac{\partial u}{\partial
r}\right)^2\frac{d\mathcal{H}^{n-1}}{e^{(3-n)u}},
\end{array}
\right.
\]
where $H_k$ still means the $k$-th symmetric form of the principle
curvature of the boundary of a convex domain in $\mathbb R^n$, and
if $\bar{\mathsf{V}}_{n}$, $\bar{\mathsf{V}}_{n-1}$,
$\bar{\mathsf{V}}_{n-2}$ and $\bar{\mathsf{V}}_{n-3}$ denote the
analogously-defined mixed volumes with respect to the conformal
metric $e^{2\bar u}g_0$, where
$$
\bar{u}(x)=\bar{u}(r)=\frac{1}{\mathcal{H}^{n-1}\big(\partial B_r(o)\big)}\int_{\partial B_r(o)}u\,d\mathcal{H}^{n-1},
$$
then:

\item{\rm(i)}
$$
\frac{1}{\mathcal{H}^{n-1}\big(\partial B_r(o)\big)}\int_{\partial B_r(o)}\left(\frac{\partial
u}{\partial
r}\right)^k\,d\mathcal{H}^{n-1}=O\left(\frac{1}{r^k}\right)\quad\text{for}\quad k=1,2,3,4,
$$
and
$$
\frac{1}{\mathcal{H}^{n-1}\big(\partial B_r(o)\big)}\int_{\partial B_r(o)}\left(\frac{\partial
u}{\partial r}\right)^2\,d\mathcal{H}^{n-1}=\left(\frac{\partial \bar u}{\partial
r}\right)^2+o\left(\frac{1}{r^2}\right),
$$
as $r\to\infty$.

\item{\rm(ii)}
$$
\frac{d\mathsf{V}_n(r)}{dr}=\frac{d\bar{\mathsf{V}}_n(r)}{dr}\big(1+o(1)\big)\quad\hbox{and}\quad
\mathsf{V}_{n-1}(r)=\bar{\mathsf{V}}_{n-1}(r)\big(1+o(1)\big)\quad\hbox{as}\quad
r\to\infty.
$$
Moreover,
$$
\mathsf{V}_{n-2}(r)=\bar{\mathsf{V}}_{n-2}(r)\big(1+o(1)\big)\quad\hbox{and}\quad
\mathsf{V}_{n-3}(r)=\bar{\mathsf{V}}_{n-3}(r)\big(1+o(1)\big)\quad\hbox{as}\quad
r\to\infty,
$$
provided
$$
\lim_{r\to\infty}\Big(1+r\frac{\partial\bar{u}}{\partial r}\Big)>0.
$$

\item{\rm(iii)}
$$
\lim_{r\to
\infty}\frac{\big(\mathsf{V}_{n-3}(r)\big)^{\frac{n-2}{n-1}}}{\omega_n^{\frac{1}{n-1}}\big(\mathsf{V}_{n-2}(r)\big)^{\frac{n-3}{n-1}}}
=1-\frac{1}{2^{n-1}\Gamma(n/2)\pi^{n/2}}\int_{\mathbb
R^n}(-\Delta)^{n/2}u\, d\mathcal{H}^n
$$
whenever
$$
1-\frac{1}{2^{n-1}\Gamma(n/2)\pi^{n/2}}\int_{\mathbb
R^n}(-\Delta)^{n/2}u\, d\mathcal{H}^n>0.
$$

\item{\rm(iv)} (\ref{eq11a}) holds.
\end{proposition}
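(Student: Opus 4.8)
The plan is to establish (i)--(iv) in that order, reducing everything after (i) to the spherical average $\bar u$ and to the radially symmetric Proposition \ref{eq:radial}. For part (i) I would first invoke Proposition \ref{l2} (i) to write $u=v+c$, so that $\partial u/\partial r=\partial v/\partial r$ and every moment estimate becomes a question about the explicit convolution $v$. Differentiating under the integral, $|\nabla v(x)|\lesssim\int_{\mathbb R^n}|x-y|^{-1}|(-\Delta)^{n/2}u(y)|\,d\mathcal H^n(y)$, and combining this with the spherical--average bound $\mathcal H^{n-1}(\partial B_r(o))^{-1}\int_{\partial B_r(o)}|z-y|^{-m}\,d\mathcal H^{n-1}(z)\lesssim(r+|y|)^{-m}$ — which for $n=3$ is the estimate on $H(|x|,|y|)$ in Proposition \ref{l1} (i) and for $n>3$ comes from the estimate on $J(|x|,|y|)$ together with Hölder's inequality — yields the $O(r^{-k})$ bounds for $k=2,4$, hence for $k=1,3$ by Hölder interpolation. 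For the refined identity I would use the algebraic identity $r^2-r\,y\cdot(z/|z|)=\tfrac12|z-y|^2+\tfrac12(r^2-|y|^2)$ for $|z|=r$ to split off the $z$--independent part,
$$
r\,\frac{\partial v}{\partial r}(z)=\Big(\text{$z$--independent}\Big)-\frac{1}{2^{n}\Gamma(n/2)\pi^{n/2}}\int_{\mathbb R^n}\frac{r^2-|y|^2}{|z-y|^2}(-\Delta)^{n/2}u(y)\,d\mathcal H^n(y),
$$
so the spherical variance of $\partial u/\partial r$ is $r^{-2}$ times the variance of this last integral; splitting the $y$--integral at $|y|=\varepsilon r$, on $|y|<\varepsilon r$ the factor $(r^2-|y|^2)/|z-y|^2$ is $1+O(\varepsilon)$ uniformly in $z$ (a $z$--independent constant plus an $O(\varepsilon)\|(-\Delta)^{n/2}u\|_{L^1}$ error), while on $|y|\ge\varepsilon r$ the contribution is dominated by $\int_{|y|\ge\varepsilon r}|(-\Delta)^{n/2}u|\,d\mathcal H^n$, and choosing $\varepsilon=\varepsilon(r)\to0$ with $\varepsilon r\to\infty$ slowly enough makes the variance $o(1)$, i.e. $\mathcal H^{n-1}(\partial B_r(o))^{-1}\int_{\partial B_r(o)}(\partial u/\partial r)^2\,d\mathcal H^{n-1}=(\partial\bar u/\partial r)^2+o(r^{-2})$, since $\partial\bar u/\partial r$ is the spherical mean of $\partial u/\partial r$. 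The hard part will be running these second--moment estimates when $n=3$: there the averaged kernels $|z-y|^{-m}$ with $m\ge n-1$ are not integrable over $\partial B_r(o)$, so the Cauchy--Schwarz route is unavailable and one must argue as in Proposition \ref{l1} (iii), Case 1, through the logarithmic kernel $\log\frac{r^2+|y|^2}{(r-|y|)^2}$ and a Fatou--type truncation.

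For parts (ii)--(iii), the unconditional half of (ii) is immediate: $\mathsf V_{n-1}(r)/\bar{\mathsf V}_{n-1}(r)=e^{-(n-1)\bar u(r)}\,\mathcal H^{n-1}(\partial B_r(o))^{-1}\int_{\partial B_r(o)}e^{(n-1)u}\,d\mathcal H^{n-1}$ and $(d\mathsf V_n/dr)/(d\bar{\mathsf V}_n/dr)=e^{-n\bar u(r)}\,\mathcal H^{n-1}(\partial B_r(o))^{-1}\int_{\partial B_r(o)}e^{nu}\,d\mathcal H^{n-1}$, both $\to1$ by Proposition \ref{l2} (ii). For $\mathsf V_{n-2}$ and $\mathsf V_{n-3}$ I would write $\tfrac1r+\partial u/\partial r=(\tfrac1r+\partial\bar u/\partial r)+(\partial u/\partial r-\partial\bar u/\partial r)$, expand, and bound the cross terms by Cauchy--Schwarz, using Proposition \ref{l2} (ii) for the spherical means of $e^{p(u-\bar u)}$ and of $(e^{p(u-\bar u)}-1)^2$ and part (i) for the means of $(\partial u/\partial r-\partial\bar u/\partial r)^2=o(r^{-2})$ and $(\partial u/\partial r-\partial\bar u/\partial r)^4=O(r^{-4})$; since the stated hypothesis $\lim_{r\to\infty}(1+r\,\partial\bar u/\partial r)>0$ makes $\tfrac1r+\partial\bar u/\partial r\asymp r^{-1}$, all cross terms are $o(1)$ relative to the main term, giving $\mathsf V_{n-2}=\bar{\mathsf V}_{n-2}(1+o(1))$ and $\mathsf V_{n-3}=\bar{\mathsf V}_{n-3}(1+o(1))$. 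Part (iii) then follows because its hypothesis $1-\frac{1}{2^{n-1}\Gamma(n/2)\pi^{n/2}}\int(-\Delta)^{n/2}u>0$ is exactly $\lim_{r\to\infty}(1+r\,\partial\bar u/\partial r)>0$ (apply the computation in the proof of Proposition \ref{eq:radial} (i) to $\bar u$, together with Proposition \ref{l2} (iv)), so (ii) applies and $\frac{\mathsf V_{n-3}^{(n-2)/(n-1)}}{\omega_n^{1/(n-1)}\mathsf V_{n-2}^{(n-3)/(n-1)}}=\frac{\bar{\mathsf V}_{n-3}^{(n-2)/(n-1)}}{\omega_n^{1/(n-1)}\bar{\mathsf V}_{n-2}^{(n-3)/(n-1)}}(1+o(1))$, whose limit is $1-\frac{1}{2^{n-1}\Gamma(n/2)\pi^{n/2}}\int(-\Delta)^{n/2}u$ by Proposition \ref{eq:radial} (i) for $\bar u$ and Proposition \ref{l2} (iv).

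For part (iv), set $\beta=1-\frac{1}{2^{n-1}\Gamma(n/2)\pi^{n/2}}\int(-\Delta)^{n/2}u$, which is $\ge0$ by (\ref{eq10}); since $s_g(\partial B_r(o))=n\mathsf V_{n-1}(r)$, $v_g(B_r(o))=\mathsf V_n(r)$ and $n\omega_n=\mathcal H^{n-1}(\partial B_1(o))$, the claim (\ref{eq11a}) amounts to $\lim_{r\to\infty}\frac{\mathsf V_{n-1}(r)^{n/(n-1)}}{\omega_n^{1/(n-1)}\mathsf V_n(r)}=\beta$. From the unconditional half of (ii), $\mathsf V_{n-1}=\bar{\mathsf V}_{n-1}(1+o(1))$ and $d\mathsf V_n/dr=(d\bar{\mathsf V}_n/dr)(1+o(1))$; integrating the latter and using monotonicity of $\bar{\mathsf V}_n$ gives $\mathsf V_n=\bar{\mathsf V}_n(1+o(1))$ whenever $\bar{\mathsf V}_n(r)\to\infty$, whereas if $\sup_r\bar{\mathsf V}_n(r)<\infty$ then $\mathsf V_n$ stays bounded with a strictly positive limit while $\bar{\mathsf V}_{n-1}(r)\to0$, hence $\mathsf V_{n-1}(r)\to0$. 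In the first case $\frac{\mathsf V_{n-1}^{n/(n-1)}}{\omega_n^{1/(n-1)}\mathsf V_n}=\frac{\bar{\mathsf V}_{n-1}^{n/(n-1)}}{\omega_n^{1/(n-1)}\bar{\mathsf V}_n}(1+o(1))$ and in the second it is $o(1)$; in either case Proposition \ref{eq:radial} (ii) applied to $\bar u$ — whose total Q--curvature equals that of $u$ by Proposition \ref{l2} (iv) — identifies the limit as $\beta$, which finishes the proof.
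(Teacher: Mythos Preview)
Your proposal is correct and follows essentially the same route as the paper. Parts (ii) and (iii) match the paper's argument almost verbatim (decompose $\mathsf V_{n-2}-\bar{\mathsf V}_{n-2}$ and $\mathsf V_{n-3}-\bar{\mathsf V}_{n-3}$, control cross terms by Cauchy--Schwarz together with Proposition~\ref{l2}\,(ii) and part~(i), and use $\lim_{r\to\infty}(1+r\,\partial\bar u/\partial r)>0$ to absorb the $r^{-1}$ remainders). Two small differences are worth noting. For (i), the paper does not give a proof at all --- it simply cites \cite[Lemma~3.4]{ChQiYa} --- so your explicit kernel-splitting sketch (with the acknowledged extra work in $n=3$ via the logarithmic kernel, as in Proposition~\ref{l1}\,(iii), Case~1) is more than the paper provides. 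For (iv), the paper keeps $\mathsf V_n$ in the denominator and applies L'H\^opital directly, replacing $d\mathsf V_n/dr$ by $d\bar{\mathsf V}_n/dr$ only at the derivative level; you instead integrate $d\mathsf V_n/dr=(d\bar{\mathsf V}_n/dr)(1+o(1))$ to get $\mathsf V_n=\bar{\mathsf V}_n(1+o(1))$ when $\bar{\mathsf V}_n\to\infty$ and then quote Proposition~\ref{eq:radial}\,(ii) for the barred ratio. Both variants are valid and lead to the same endpoint.
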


\begin{proof} (i) The argument can be achieved via a slight modification of the proof of \cite[Lemma 3.4]{ChQiYa} -- the details are left for the interested readers.

(ii) The first two relations follow directly from
Proposition\;$\ref{l2}$ (ii). To prove the second two relations, we
will bring the ideas used in proving \cite[Lemma 3.5]{ChQiYa} into
play.

For simplicity, in what follows, let us put $a=\frac{\partial \bar u}{\partial r}$
and $b=e^{\bar u}$. Then from the definition of
$\bar{\mathsf{V}}_{n-2}$ and the easily-checked equation
$\int_{\partial B_r(o)}\left(\frac{\partial u}{\partial
r}-a\right)\,d\mathcal{H}^{n-1}=0$ we get
$$
\bar{\mathsf{V}}_{n-2}(r)=\frac{1}{n}\int_{\partial
B_r(o)}\left(\frac{1}{r}+\frac{\partial u}{\partial
r}\right)e^{(n-2)u}\,d\mathcal{H}^{n-1}=\frac{1}{n}\mathcal{H}^{n-1}\big(\partial
B_r(o)\big)\left(\frac{1}{r}+a\right)b^{n-2},
$$
and consequently,
\begin{eqnarray*}
&&\mathsf{V}_{n-2}(r)-\bar{\mathsf{
V}}_{n-2}(r)\\
&&=\frac{1}{n}\left(\frac{1}{r}+a\right)\int_{\partial
B_r(o)}\left(e^{(n-2)u}-b^{n-2}\right)\,d\mathcal{H}^{n-1}\\
&&+\frac{1}{n}\int_{\partial
B_r(o)}\left(\frac{\partial u}{\partial r}-a\right)e^{(n-2)u}\,d\mathcal{H}^{n-1}\\
&&=\frac{1}{n}\left(\frac{1}{r}+a\right)\int_{\partial
B_r(o)}\left(e^{(n-2)u}-b^{n-2}\right)\,d\mathcal{H}^{n-1}\\
&&+\frac{1}{n}\int_{\partial B_r(o)}\left(\frac{\partial u}{\partial
r}-a\right)\left(e^{(n-2) u}-b^{n-2}\right)\,d\mathcal{H}^{n-1}.
\end{eqnarray*}
Now, using Proposition\;$\ref{l2}$ (ii) and the above-established
formula for $\bar{\mathsf{V}}_{n-2}$ we get
\begin{eqnarray*}
&&\frac{1}{n}\left(\frac{1}{r}+a\right)\int_{\partial
B_r(o)}\left(e^{(n-2)\bar
u}-b^{n-2}\right)\,d\mathcal{H}^{n-1}\\
&&=\frac{1}{n}\left(\frac{1}{r}+a\right)\mathcal{H}^{n-1}\big(\partial
B_r(o)\big)\left(e^{o(1)}-1\right)b^{n-2}\,d\mathcal{H}^{n-1}\\
&&=\bar{\mathsf{V}}_{n-2}(r)o(1).
\end{eqnarray*}
At the same time, a combined application of Cauchy-Schwarz's inequality, the binomial identity, the last-established (i) and Proposition \ref{l2} (ii) yields
\begin{eqnarray*}
&&\int_{\partial B_r(o)}\left(\frac{\partial u}{\partial
r}-a\right)\left(e^{(n-2)u}-b^{n-2}\right)\,d\mathcal{H}^{n-1}\\
&&\leq\left(\int_{\partial B_r(o)}\left(\frac{\partial u}{\partial
r}-a\right)^2\,d\mathcal{H}^{n-1}\right)^{\frac{1}{2}}\left(\int_{\partial
B_r(o)}\left(e^{(n-2)u}-b^{n-2}\right)^2\,d\mathcal{H}^{n-1}\right)^{\frac{1}{2}}\\
&&=\left(\int_{\partial B_r(o)}\left(\left(\frac{\partial
u}{\partial r}\right)^2-2a\frac{\partial u}{\partial
r}+a^2\right)\,d\mathcal{H}^{n-1}\right)^\frac12\\
&&\times\left(\int_{\partial
B_r(o)}\left(e^{(n-2)u}-b^{n-2}\right)^2\,d\mathcal{H}^{n-1}\right)^{\frac{1}{2}}\\
&&=\left(\mathcal{H}^{n-1}\big(\partial B_r(o)\big)o\left(\frac{1}{r^2}\right)\right)^\frac12\left(\int_{\partial
B_r(o)}\left(e^{(n-2)u}-b^{n-2}\right)^2\,d\mathcal{H}^{n-1}\right)^{\frac{1}{2}}\\
&&=\left(\mathcal{H}^{n-1}\big(\partial B_r(o)\big)o\left(\frac{1}{r^2}\right)\right)^\frac12\left(\mathcal{H}^{n-1}\big(\partial B_r(o)\big)b^{2n-4}o(1)\right)^\frac12\\
&&=r^{-1}\mathcal{H}^{n-1}\big(\partial B_r(o)\big)b^{n-2}o(1).
\end{eqnarray*}
As a result, we find
\begin{eqnarray*}
&&\mathsf{V}_{n-2}(r)-\bar{\mathsf{V}}_{n-2}(r)\\
&&=\Big(\bar{\mathsf{V}}_{n-2}(r)+r^{-1}\mathcal{H}^{n-1}\big(\partial
B_r(o)\big)b^{n-2}\Big)o(1)\\
&&=\bar{\mathsf{V}}_{n-2}(r)\Big(1+\frac{n}{ar+1}\Big)o(1)\\
&&=\bar{\mathsf{V}}_{n-2}(r)o(1),
\end{eqnarray*}
thanks to the assumption $\lim_{r\to\infty}(1+ar)>0$. This proves the third relation.

To prove the fourth one, we argue in a similar way. First of all, using the definition of $\bar V_{n-3}$, we get
$$
\bar{\mathsf{V}}_{n-3}(r)=\frac{1}{n}\int_{\partial
B_r(o)}\left(\frac{1}{r}+\frac{\partial u}{\partial
r}\right)^2e^{(n-3)u}\,d\mathcal{H}^{n-1}=\frac{\mathcal{H}^{n-1}\big(\partial
B_r(o)\big)}{nb^{3-n}}\left(\frac{1}{r}+a\right)^2,
$$
and then
\begin{eqnarray*}
&&\mathsf{V}_{n-3}(r)-\bar{\mathsf{V}}_{n-3}(r)\\
&&=\frac{1}{n}\int_{\partial
B_r(o)}\left(\frac{1}{r}+a\right)^2\left(e^{(n-3)u}-b^{n-3}\right)\,d\mathcal{H}^{n-1}\\
&&+\frac{2}{rn}\int_{\partial
B_r(o)}\left(\frac{\partial u}{\partial
r}-a\right)e^{(n-3)u}\,d\mathcal{H}^{n-1}\\
&&+\frac{1}{n}\int_{\partial B_r(o)}\left(\Big(\frac{\partial
u}{\partial r}\Big)^2-a^2\right)e^{(n-3)u}\,d\mathcal{H}^{n-1}.
\end{eqnarray*}
In the sequel, we control the three terms in the last formula. As in the proof of the third relation, using Proposition\;$\ref{l2}$ (ii), we estimate the first term as follows:
$$
\frac{1}{n}\int_{\partial
B_r}\left(\frac{1}{r}+a\right)^2\left(e^{(n-3)u}-b^{n-3}\right)\,d\mathcal{H}^{n-1}=V_{n-3}(r)o(1).
$$
Next, still following the same argument based on Cauchy-Schwarz's
inequality, Proposition\;$\ref{l2}$ (ii) and Proposition \ref{eq:radial2} (i), we
get the estimate for the second term:
$$
\frac{2}{rn}\int_{\partial B_r(o)}\left(\frac{\partial u}{\partial
r}-a\right)e^{(n-3)u}\,d\mathcal{H}^{n-1}=r^{-2}\mathcal{H}^{n-1}\big(\partial
B_r(o)\big)b^{n-3}o(1).
$$
Now, in order to estimate the third term, we firstly employ Young's inequality to get
\begin{eqnarray*}
&&\left|\int_{\partial B_r(o)}\left(\frac{\partial u}{\partial
r}-a\right)\frac{\partial u}{\partial r}e^{(n-3)u}\,d\mathcal{H}^{n-1}\right|\\
&&\leq\left(\int_{\partial B_r(o)}\left(\frac{\partial u}{\partial
r}-a\right)^2\,d\mathcal{H}^{n-1}\right)^{\frac{1}{2}}\\
&&\times\left(\int_{\partial B_r(o)}\left(\frac{\partial u}{\partial
r}\right)^4\,d\mathcal{H}^{n-1}\right)^{\frac{1}{4}}\\
&&\times\left(\int_{\partial
B_r(o)}e^{4(n-3)u}\,d\mathcal{H}^{n-1}\right)^{\frac{1}{4}}.
\end{eqnarray*}
Secondly, we rewrite the third term and use Cauchy-Schwarz's inequality, Proposition \ref{l2} (ii) and Proposition \ref{eq:radial2} (i)
and the finite limit $\lim_{r\to\infty}(1+ra)>0$ to derive
\begin{eqnarray*}
&&\int_{\partial B_r(o)}\left(\Big(\frac{\partial u}{\partial
r}\Big)^2-a^2\right)e^{(n-3)u}\,d\mathcal{H}^{n-1}\\
&&=a\int_{\partial B_r(o)}\left(\frac{\partial u}{\partial
r}-a\right)e^{(n-3)u}\,d\mathcal{H}^{n-1}\\
&&+\int_{\partial B_r(o)}\left(\frac{\partial u}{\partial
r}-a\right)\Big(\frac{\partial
u}{\partial r}\Big)e^{(n-3)u}\,d\mathcal{H}^{n-1}\\
&&\le r^{-2}\mathcal{H}^{n-1}\big(\partial B_r(o)\big)b^{n-3}o(1).
\end{eqnarray*}

With the help of the above estimates and the limit $\lim_{r\to\infty}(1+ar)>0$ we get
$$
\mathsf{V}_{n-3}(r)-\bar{\mathsf{V}}_{n-3}(r)=\mathsf{V}_{n-3}(r)o(1)+r^{-2}\mathcal{H}^{n-1}\big(\partial
B_r(o)\big)o(1)=\mathsf{V}_{n-3}(r)o(1),
$$
completing the proof of the fourth relation.

(iii) Under the given assumption, the proofs of Propositions
\ref{l1} (iv) and \ref{l2} (iv) yield
$$
\lim_{r\to\infty}\Big(1+r\frac{\partial\bar{u}}{\partial r}\Big)
=1-\frac{\int_{\mathbb R^n}(-\Delta)^{n/2}\bar{u}\,
d\mathcal{H}^n}{2^{n-1}\Gamma(n/2)\pi^{n/2}}=1-\frac{\int_{\mathbb R^n}(-\Delta)^{n/2}u\,
d\mathcal{H}^n}{2^{n-1}\Gamma(n/2)\pi^{n/2}}>0.
$$
This fact, along with Proposition\;$\ref{eq:radial}$\;(i) and
Proposition\;$\ref{eq:radial2}$\;(ii), implies
$$
1-\frac{\int_{\mathbb R^n}(-\Delta)^{n/2}u\,
d\mathcal{H}^n}{2^{n-1}\Gamma(n/2)\pi^{n/2}}=\lim_{r\to\infty}\frac{\big(\bar{\mathsf{V}}_{n-3}(r)\big)^{\frac{n-2}{n-1}}}{\omega_n^{\frac{1}{n-1}}\big(\bar{\mathsf{V}}_{n-2}(r)\big)^{\frac{n-3}{n-1}}}=\lim_{r\to\infty}\frac{\big(\mathsf{V}_{n-3}(r)\big)^{\frac{n-2}{n-1}}}{\omega_n^{\frac{1}{n-1}}\big(\mathsf{V}_{n-2}(r)\big)^{\frac{n-3}{n-1}}},
$$
as desired.

(iv) The formula (\ref{eq11a}) is demonstrated through the
equalities
$$
v_g\big(B_r(o)\big)=\mathsf{V}_n(r);\quad
s_g\big(B_r(o)\big)=n\mathsf{V}_{n-1}(r)
$$
and the forthcoming analysis. By Proposition \ref{eq:radial2} (ii),
we have
$$
\lim_{r\to
\infty}\frac{\big(\mathsf{V}_{n-1}(r)\big)^{\frac{n}{n-1}}}{\omega_n^{\frac{1}{n-1}}\mathsf{V}_{n}(r)}=\lim_{r\to
\infty}\frac{\big(\bar
{\mathsf{V}}_{n-1}(r)\big)^{\frac{n}{n-1}}}{\omega_n^{\frac{1}{n-1}}\mathsf{V}_{n}(r)}.
$$

{\it Case 1: $1-\frac{\int_{\mathbb R^n}(-\Delta)^{n/2}u\,
d\mathcal{H}^n}{2^{n-1}\Gamma(n/2)\pi^{n/2}}>0$.} This condition
implies
$$
\lim_{r\to\infty}\bar{\mathsf{
V}}_{n-1}(r)=\lim_{r\to\infty}\mathsf{V}_n(r)=\infty.
$$
Consequently, a combined application of L'H\"{o}pital's rule and
(ii)'s of Propositions \ref{eq:radial2} and \ref{eq:radial} yields
$$
\lim_{r\to
\infty}\frac{\big(\mathsf{V}_{n-1}(r)\big)^{\frac{n}{n-1}}}{\omega_n^{\frac{1}{n-1}}\mathsf{V}_{n}(r)}
=\lim_{r\to \infty}\frac{\frac{d}{dr}\big(\bar
{\mathsf{V}}_{n-1}(r)\big)^{\frac{n}{n-1}}}{\omega_n^{\frac{1}{n-1}}\frac{d}{dr}\bar{\mathsf{V}}_{n}(r)}=1-\frac{\int_{\mathbb
R^n}(-\Delta)^{n/2}u\, d\mathcal{H}^n}{2^{n-1}\Gamma(n/2)\pi^{n/2}}.
$$

{\it Case 2: $1-\frac{\int_{\mathbb R^n}(-\Delta)^{n/2}u\,
d\mathcal{H}^n}{2^{n-1}\Gamma(n/2)\pi^{n/2}}=0$.} With this
hypothesis, we argue as in the radial case, and thus we have to
consider the situation where $\mathsf{V}_n$ is bounded -- under this
boundedness we employ Proposition \ref{eq:radial2} (ii) to derive
$$
\lim_{r\to\infty}\frac{d\mathsf{V}_n(r)}{dr}=0=\lim_{r\to\infty}\frac{d\bar{\mathsf{V}}_n(r)}{dr},
$$
and $\lim_{r\to\infty} r^{n-1}e^{n\bar u}=0$. Hence we obtain
$$
\lim_{r\to
\infty}\frac{\big(\mathsf{V}_{n-1}(r)\big)^{\frac{n}{n-1}}}{\omega_n^{\frac{1}{n-1}}\mathsf{V}_{n}(r)}=0=1-\frac{\int_{\mathbb
R^n}(-\Delta)^{n/2}u\, d\mathcal{H}^n}{2^{n-1}\Gamma(n/2)\pi^{n/2}}.
$$
\end{proof}

\bibliographystyle{amsplain}

\begin{thebibliography}{10}



\bibitem {BoHeSa} M. Bonk, J. Heinonen and E. Saksman, \textit{Logarithmic potentials, quasiconformal flows, and Q-curvature}, {Duke Math. J.} \textbf{142}
(2008), 197-239.

\bibitem {ChQiYa} S.-Y. A. Chang, J. Qing and P. C. Yang, \textit{On
the Chern-Gauss-Bonnet integral for conformal metrics on $R^4$},
{Duke Math. J.}\,\textbf{103}(2000), 523-544.

\bibitem {ChQiYa1} S.-Y. A. Chang, J. Qing and P. C. Yang, \textit{Compactification of a class of conformally flat 4-manifold}, {Invent. Math.}\,\textbf{142}(2000), 65-93.

\bibitem {CV} S. Cohn-Vossen, \textit{K\"urzeste Wege und Totalkr\"ummung auf
Fl\"achen}, {Compositio Math.}\, \textbf{2}(1935), 69-133.

\bibitem {Fa} H. Fang, \textit{On a conformal
Gauss-Bonnet-Chern inequality for LCF manifolds and related topics},
{Calc. Var.}\, \textbf{23}(2005), 469-496.

\bibitem {FeGr} C. Fefferman and C. R. Graham, \textit{Q-curvature and
Poincar\'e metrics}, {Math. Res. Lett.}\, \textbf{9}(2002), 139-151.

\bibitem {Fi} R. Finn, \textit{On a class of conformal metrics, with
application to differential geometry}, {Comment. Math. Helv.}\,
\textbf{40}(1965), 1-30.


\bibitem {KiSrTr} A. A. Kilbas, H. M. Srivastava and J. J. Trujillo, \textit{Theory and Applications of Fractional Differential Equations}, {North-Holland Mathematics Studies}, {204}, {Elsevier}, 2006.

\bibitem {LieLo} E. H. Lieb and M. Loss, \textit{Analysis -2nd ed.}, {Graduate Studies in Mathematics}, {14}, {American Mathematical
Society, Providence, RI,} 2001.

\bibitem {Nd} C. B. Ndiaye, \textit{Constant Q-curvature metrics in arbitrary dimension}, {J. Funct. Anal.}\,\textbf{251}
(2007), 1-58.

\bibitem {Pet} L. J. Peterson, \textit{Future directions of research in geometry: a summary of the pannel discussion at the 2007 Midwest Geometry Conference}, {arXiv:0782.2170v1 [math.DG] 16 Aug 2007}.


\bibitem {Xu} X. Xu, \textit{Uniqueness and non-existence theorems for conformally invariant equations}, {J. Funct. Anal.} \textbf{222}(2005), 1-28.

\bibitem {Xu2} X. Xu, \textit{On the Chern-Gauss-Bonnet formula for complete locally conformal flat manifolds with finitely many simple ends}.
{preprint,}(2007).

\end{thebibliography}

\end{document}